\documentclass[11pt,reqno]{amsart}
\topmargin= .5cm
\textheight= 22.5cm
\textwidth= 32cc
\baselineskip=16pt
\usepackage{indentfirst, amssymb,amsmath,amsthm, mathrsfs,cite,graphicx,float,caption,subcaption}
\evensidemargin= .9cm
\oddsidemargin= .9cm
\newtheorem*{theoA}{Theorem A}
\newtheorem*{theoB}{Theorem B}
\newtheorem*{theoC}{Theorem C}

\newtheorem{theo}{Theorem}[section]
\newtheorem{lem}{Lemma}[section]
\newtheorem{cor}{Corollary}[section]

\newtheorem{rem}{Remark}[section]

\newtheorem{defi}{Definition}

\newtheorem{open problem}{Open problem}[section]

\newcommand{\ol}{\overline}
\newcommand{\be}{\begin{equation}}
\newcommand{\ee}{\end{equation}}
\newcommand{\bs}{\begin{small}}
\newcommand{\es}{\end{small}}
\newcommand{\beas}{\begin{eqnarray*}}
\newcommand{\eeas}{\end{eqnarray*}}
\newcommand{\bea}{\begin{eqnarray}}
\newcommand{\eea}{\end{eqnarray}}
\renewcommand{\epsilon}{\varepsilon}
\numberwithin{equation}{section}
\begin{document}
\title[Geometric analysis ]{Geometric analysis of a class of harmonic mappings defined by a differential Inequality}
\author[V. Allu, R. Biswas and R. Mandal ]{Vasudevarao Allu, Raju Biswas and Rajib Mandal}
\date{}
\address{Vasudevarao Allu, Department of Mathematics, School of Basic Sciences, Indian Institute of Technology Bhubaneswar, Bhubaneswar-752050, Odisha, India.}
\email{avrao@iitbbs.ac.in}
\address{Raju Biswas, Department of Mathematics, Raiganj University, Raiganj, West Bengal-733134, India.}
\email{rajubiswasjanu02@gmail.com}
\address{Rajib Mandal, Department of Mathematics, Raiganj University, Raiganj, West Bengal-733134, India.}
\email{rajibmathresearch@gmail.com}
\let\thefootnote\relax
\footnotetext{2020 Mathematics Subject Classification: 30C45, 30C50, 30C80, 31A05.}
\footnotetext{Key words and phrases: Harmonic functions, starlike and convex functions, convolution, convex combination, coefficient estimate, growth theorem.}
\begin{abstract}
In this paper, we introduces and undertake as a systematical investigation of the class $\mathcal{P}_{\mathcal{H}}^{0}(\alpha,M)$ of normalized harmonic mappings $f = h + \overline{g}$ in the unit disk $\mathbb{D}$, defined by the differential inequality
\beas
\text{Re}\left((1-\alpha)h'(z) + \alpha z h''(z)\right) > -M + \left|(1-\alpha)g'(z) + \alpha z g''(z)\right|\quad\text{for}\quad z\in\Bbb{D},
\eeas
where $M > 0$, $\alpha \in (0,1]$, and $g'(0) = 0$. This class extends the harmonic analogue of functions with positive real part and offers a unified framework for analyzing their geometric characteristics. We obtain sharp coefficient bounds for both the analytic and co-analytic parts, establish sharp growth bounds, and determine the radii of univalency, starlikeness, and convexity. Furthermore, we show that $\mathcal{P}_{\mathcal{H}}^{0}(\alpha,M)$ is closed under convex combinations, and under suitable restrictions on the  parameters, it is also closed under convolution. Our findings generalize and extend several known
results in the theory of harmonic mappings.
\end{abstract}
\maketitle
\section{Introduction and Preliminary Results}
\noindent The application of harmonic mappings has become a valuable tool in the study of fluid flow problems (see \cite{AC2014}). Moreover, univalent harmonic functions that are characterized by geometric properties such as convexity, starlikeness, and close-to-convexity emerge naturally in the context of planar fluid dynamics problems.\\[2mm]
\indent Let $f=u+iv$ be a complex-valued function of $z=x+i y$ in a simply connected domain $\Omega$. If $f\in C^2(\Omega)$ (continuous first and second partial derivatives exist in $\Omega$) and  satisfies the Laplace equation $\Delta f =4f_{z\ol z} = 0$ in $\Omega$, 
then $f$ is said to be harmonic in $\Omega$. Note that every harmonic mapping $f$ has the canonical representation $f = h + \ol g$, where $h$ and 
$g$ are analytic in $\Omega$, known respectively as the analytic and co-analytic parts of $f$. This representation is 
unique up to an additive constant (see \cite{D2004}). The inverse function theorem and a result of Lewy \cite{L1936} shows that a harmonic function $f$ is locally univalent in 
$\Omega$ if, and only if, the Jacobian  of $f$, defined by $J_f(z):=|h'(z)|^2-|g'(z)|^2$ is non-zero in $\Omega$. 
A harmonic mapping $f$ is locally univalent and sense-preserving in $\Omega$ if, and only if, $J_f (z) > 0$ in $\Omega$. Equivalently, this condition holds if $h'\ne 0$ in $\Omega$ and the dilatation of $f$, defined as $\omega_f:= \omega=g'/h'$, satisfies $|\omega_f| < 1$ in $\Omega$ (see \cite{L1936}). 
Let $\mathcal{H}$ denote the class of all complex-valued harmonic functions $f=h+\ol g$ defined in $\mathbb{D}$, where $h$ and $g$ are analytic in $\mathbb{D}$ with the normalization
$h(0)=h'(0)-1=0$ and $g(0)=0$. If the co-analytic part $g(z)\equiv 0$ in $\mathbb{D}$, then the class $\mathcal{H}$ reduces to the class $\mathcal{A}$ of analytic functions in 
$\mathbb{D}$ with $f(0)=f'(0)-1=0$.
Let $\mathcal{S}_{\mathcal{H}}$ denote the class of functions $f\in\mathcal{H}$ that are both sense-preserving and univalent in $\mathbb{D}$ and let 
$\mathcal{S}_{\mathcal{H}}^0=\left\{f=h+\ol g\in\mathcal{S}_{\mathcal{H}}: g'(0)=0 \right\}$. Every $f=h+\ol g\in\mathcal{S}_{\mathcal{H}}^0$ has the following form:
\bea\label{e1}f(z)=h(z)+\ol {g(z)}=z+\sum_{n=2}^\infty a_nz^n+\ol{\sum_{n=2}^\infty b_nz^n}. \eea
\indent If the co-analytic part $g(z)\equiv 0$ in $\mathbb{D}$, then both the classes $\mathcal{S}_{\mathcal{H}}$ and $\mathcal{S}_{\mathcal{H}}^0$ reduces to the class $\mathcal{S}$ of 
univalent and analytic functions in $\mathbb{D}$ with $f(0)=f'(0)-1=0$. 
Both $\mathcal{S}_{\mathcal{H}}$ and $\mathcal{S}_{\mathcal{H}}^0$ are natural harmonic 
generalizations of $\mathcal{S}$, but only $\mathcal{S}_{\mathcal{H}}^0$ is compact although both the classes $\mathcal{S}_{\mathcal{H}}$ and $\mathcal{S}_{\mathcal{H}}^0$ are normal. In 1984, Clunie and Sheil-Small \cite{CS1984} undertook a comprehensive study of the class $\mathcal{S}_{\mathcal{H}}$ and its geometric subclasses. This study 
has subsequently garnered extensive attention from researchers (see \cite{BJJ2013, KPV2014,WL2001,MBG2024,BM2025, ABM2025,BM2026}). \\[2mm]
\indent A domain $\Omega$ is called starlike with respect to a point $z_0\in\Omega$ if the closed line segment joining $z_0$ to any point in $\Omega$ lies in $\Omega$, {\it i.e.}, 
$(1-t)z_0+t z\in\Omega$ for $t\in[0,1]$ and $z\in\Omega$. In particular, if $z_0=0$, then $\Omega$ is simply called starlike. A complex-valued harmonic mapping 
$f\in\mathcal{H}$ is said to be starlike if $f(\mathbb{D})$ is starlike. 
Let $\mathcal{S}_{\mathcal{H}}^*$ denote the class of harmonic starlike functions in $\mathbb{D}$. A domain $\Omega$ is called convex if it is starlike with respect to any point 
in $\Omega$. In other words, convexity implies starlikeness, but the converse is not necessarily true. A domain can be starlike 
without being convex. A function $f\in\mathcal{H}$ is said to be convex if $f(\mathbb{D})$ is convex. Let  $\mathcal{K}_{\mathcal{H}}$ denote the class of harmonic convex 
mappings in $\mathbb{D}$.  A domain $\Omega$ is said to be close-to-convex if the complement of $\Omega$ can be expressed as a union of closed half-lines with the condition that the corresponding open half-lines are disjoint. A function $f\in\mathcal{S}_{\mathcal{H}}$ which maps the unit disk $\mathbb{D}$ onto a close-to-convex domain is called a close-to-convex function.  The following definitions are essential for a comprehensive understanding of this paper.
\begin{defi}\cite{D1983,SS1989} Let $\psi_1$ and $\psi_2$ be two analytic functions in $\mathbb{D}$ given by $\psi_1(z)=\sum_{n=0}^{\infty}a_nz^n$ and $\psi_2(z)=\sum_{n=0}^{\infty}b_nz^n$. The convolution (or, Hadamard product) is defined by $\left(\psi_1*\psi_2\right)(z)=\sum_{n=0}^{\infty}a_nb_nz^n~\text{for}~ z\in\mathbb{D}$. \end{defi}
\begin{defi}\cite{G2002} For harmonic functions $f_1=h_1+\ol{g_1}$ and $f_2=h_2+\ol{g_2}$ in $\mathcal{H}$, the convolution is defined as $f_1*f_2=h_1*h_2+\ol{g_1*g_2}$.\end{defi}
\begin{defi}\cite{SS1989} A sequence $\{a_n\}$ of non-negative numbers is said to be a convex null sequence if $a_n\to 0$ as $n\to\infty$ and $a_0-a_1\geq a_1-a_2\geq \cdots\geq a_{n-1}-a_n\geq\cdots\geq 0$.
\end{defi}
\noindent Let $\mathcal{P}$ denote the class of analytic functions $h$ in $\mathbb{D}$ that satisfy the condition $\textrm{Re}\left(h'(z)\right)>0$ in $\mathbb{D}$ with $h(0)=h'(0)-1=0$.
It is well-known that $\mathcal{P}\subsetneq\mathcal{S}$. 
MacGregor \cite{M1962} proved that each partial sum $s_n(h)=\sum_{k=0}^n a_k z^k$ ($n\geq 2$) of the function $h(z)=\sum_{k=0}^\infty a_k z^k\in\mathcal{P}$ maintained its univalence within the disk $|z|<1/2$. Furthermore, $h(z)$ maps the disk $|z|<\sqrt{2}-1$ onto a convex domain. The numbers $1/2$ and $\sqrt{2}-1$ are best possible constants. Afterwards, Singh \cite{S1970} proved that each partial sum $s_n(h)$ of the function $h(z)=\sum_{k=0}^\infty a_k z^k\in\mathcal{P}$ is convex in the disk $|z| < 1/4$. The number $1/4$ is the best possible constant. \\[2mm]
\indent Ponnusamy {\it et al.} \cite{PYY2013} have studied the following class as a harmonic analogue of the class $\mathcal{P}$: 
\beas\mathcal{P}_{\mathcal{H}}:=\left\{f=h+\ol{g}\in\mathcal{H}: \textrm{Re}\left(h'(z)\right)>|g'(z)|\quad\text{in}\quad\mathbb{D}\right\} \eeas
and $\mathcal{P}_{\mathcal{H}}^0:=\left\{f=h+\ol{g}\in\mathcal{P}_{\mathcal{H}}: g'(0)=0\right\}$. The authors \cite{PYY2013} proved that functions in $\mathcal{P}_{\mathcal{H}}$ are close-to-convex in $\mathbb{D}$. Li and Ponnusamy \cite{1LP2013, 2LP2013} have investigated the radius of univalency and convexity of sections of functions $f\in \mathcal{P}_{\mathcal{H}}^0$.\\[2mm]
\indent In 2020, Ghosh and Allu \cite{GA2020} introduced the class $\mathcal{P}_{\mathcal{H}}^0(M)$ for $M>0$, of all functions $f=h+\ol{g}\in\mathcal{H}$ satisfying the following conditions:
\beas\text{Re}\left(zh''(z)\right)>-M+|zg''(z)|~\text{with}~ g'(0)=0~\text{for}~ z\in\mathbb{D}\eeas
and established the following results regarding the sharp coefficient bounds and growth results for functions in $\mathcal{P}_{\mathcal{H}}^0(M)$.
\begin{theoA}\cite{GA2020} Let $f=h+\ol{g}\in \mathcal{P}_{\mathcal{H}}^0(M)$ for $M >0$ be of the form (\ref{e1}). Then, for $n\geq 2$, $|b_n|\leq 2M/(n(n-1))$. The result is sharp for the function $f(z)= z-M\ol{z}^n/(n(n-1))$.\end{theoA}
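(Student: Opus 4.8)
The plan is to deduce the estimate from the classical coefficient bound for analytic functions with positive real part, after a rotation trick that decouples the defining inequality. Write $h(z)=z+\sum_{n\ge 2}a_nz^n$ and, since $g'(0)=0$, $g(z)=\sum_{n\ge 2}b_nz^n$; then $zh''(z)=\sum_{n\ge 2}n(n-1)a_nz^{n-1}$ and $zg''(z)=\sum_{n\ge 2}n(n-1)b_nz^{n-1}$ are analytic on $\mathbb{D}$ and vanish at the origin. For a fixed $\theta\in\mathbb{R}$ put
\[
\Phi_\theta(z)=\frac{1}{M}\Bigl(zh''(z)+M+e^{i\theta}zg''(z)\Bigr).
\]
Since $\textrm{Re}\bigl(e^{i\theta}zg''(z)\bigr)\ge-|zg''(z)|$, the hypothesis $\textrm{Re}(zh''(z))>-M+|zg''(z)|$ gives $\textrm{Re}\bigl(zh''(z)+M+e^{i\theta}zg''(z)\bigr)\ge\textrm{Re}(zh''(z))+M-|zg''(z)|>0$ on $\mathbb{D}$. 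Hence $\textrm{Re}\,\Phi_\theta>0$ in $\mathbb{D}$ and $\Phi_\theta(0)=1$, so $\Phi_\theta$ lies in the Carath\'eodory class for every $\theta$.

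Expanding, $\Phi_\theta(z)=1+\sum_{n\ge 2}\tfrac{n(n-1)}{M}\bigl(a_n+e^{i\theta}b_n\bigr)z^{n-1}$, so the bound $|c_k|\le 2$ on the Taylor coefficients of a Carath\'eodory function yields $\tfrac{n(n-1)}{M}\bigl|a_n+e^{i\theta}b_n\bigr|\le 2$ for every $n\ge 2$ and every $\theta$. Choosing $\theta$ so that $e^{i\theta}b_n$ has the same argument as $a_n$ gives $|a_n|+|b_n|=\max_\theta|a_n+e^{i\theta}b_n|\le 2M/(n(n-1))$, and in particular $|b_n|\le 2M/(n(n-1))$. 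For the extremal assertion one checks directly that $f(z)=z-M\ol{z}^n/(n(n-1))$ belongs to $\mathcal{P}_{\mathcal{H}}^0(M)$: here $zh''(z)\equiv 0$ and $zg''(z)=-Mz^{n-1}$, so the defining condition reduces to $0>-M+M|z|^{n-1}$, which holds throughout $\mathbb{D}$.

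The one genuinely non-mechanical point is the rotation trick in the first paragraph: the hypothesis binds $h$ and $g$ together through the modulus $|zg''(z)|$, but multiplying the co-analytic contribution by an arbitrary unimodular constant converts this single inequality into a one-parameter family of positive-real-part conditions, and then the standard Carath\'eodory coefficient estimate together with an optimization over $\theta$ completes the argument; the sharpness claim is then a one-line verification. (A closer analysis---integrating $\textrm{Re}(zh''(z)+M)$ over the circle $|z|=r$ and comparing it with the circular mean of $|zg''(z)|$---in fact yields the stronger bound $|b_n|\le M/(n(n-1))$, which is precisely the value attained by the function displayed above.)
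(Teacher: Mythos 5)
Your argument is correct as a derivation of the stated bound $|b_n|\le 2M/(n(n-1))$, but it is not the route the paper takes for this estimate. The rotation trick you use --- passing to $h+e^{i\theta}g$ and invoking the Carath\'eodory coefficient bound $|c_k|\le 2$ --- is exactly the mechanism the paper reserves for the \emph{joint} bound $|a_n|+|b_n|\le 2M/(n(n-1))$ (Theorem B, and its generalization Theorem \ref{T3}); extracting $|b_n|$ from it costs you the factor $2$, because the Carath\'eodory constant $2$ must absorb the $a_n$ contribution as well. For the coefficient of the co-analytic part alone, the paper (in the generalization Theorem \ref{T2}, which reduces to this statement at $\alpha=1$) instead applies Cauchy's integral formula directly to $zg''(z)$, bounds $|zg''(z)|$ pointwise by $M+\mathrm{Re}(zh''(z))$ using the defining inequality, and then integrates over the circle $|z|=r$; the circular mean of $\mathrm{Re}(zh''(z))$ vanishes, so the $h$-part contributes nothing and one obtains the stronger estimate $|b_n|\le M/(n(n-1))$. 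This is precisely the ``closer analysis'' you sketch in your final parenthetical, and it is the right one here: note that the extremal function $f(z)=z-M\ol{z}^n/(n(n-1))$ displayed in the statement has $|b_n|=M/(n(n-1))$ and therefore does \emph{not} attain the bound $2M/(n(n-1))$ --- the sharpness claim as quoted from \cite{GA2020} only matches the improved constant, which is the version this paper actually proves. So your proof establishes the theorem as literally stated, but you should either carry out the averaging argument to get the constant the extremal function attains, or note explicitly that the factor $2$ in the displayed bound is not sharp.
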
 
\begin{theoB}\cite{GA2020}
Let $f=h+\ol{g}\in \mathcal{P}_{\mathcal{H}}^0(M)$ for $M >0$ be of the form (\ref{e1}). Then, for $n\geq 2$, 
(i)  $|a_n|+|b_n|\leq \frac{2M}{n(n-1)}$; (ii) $\left||a_n|-|b_n|\right|\leq \frac{2M}{n(n-1)}$;
(iii)$|a_n|\leq\frac{2M}{n(n-1)}$.
The results are sharp for the function $f$ given by $f'(z)=1-2M\ln(1-z)$.
\end{theoB}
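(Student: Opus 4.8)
The plan is to reduce the mixed condition defining $\mathcal{P}_{\mathcal{H}}^0(M)$ to the classical Carath\'eodory coefficient inequality for analytic functions with positive real part, by ``rotating away'' the modulus $|zg''(z)|$. Fix $\theta\in\mathbb{R}$ and introduce the analytic function $H_\theta:=h-e^{i\theta}g$ on $\mathbb{D}$; since $g'(0)=0$, one has $H_\theta(0)=0$ and $H_\theta'(0)=1$, so that $H_\theta(z)=z+\sum_{n\ge2}\bigl(a_n-e^{i\theta}b_n\bigr)z^n$. The first step is to note that, because $|zg''(z)|\ge\mathrm{Re}\bigl(e^{i\theta}zg''(z)\bigr)$ pointwise, the defining inequality of the class yields $\mathrm{Re}\bigl(M+zh''(z)-e^{i\theta}zg''(z)\bigr)>0$ on $\mathbb{D}$; hence $p_\theta:=1+\tfrac{1}{M}\,zH_\theta''$ has $\mathrm{Re}\,p_\theta>0$ on $\mathbb{D}$ with $p_\theta(0)=1$.

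Next I would read off Taylor coefficients. Writing $p_\theta(z)=1+\sum_{m\ge1}\frac{(m+1)m}{M}\bigl(a_{m+1}-e^{i\theta}b_{m+1}\bigr)z^m$ and invoking the standard bound $|[z^m]p_\theta|\le 2$ valid for every function of positive real part normalized at the origin, we obtain, for all $n\ge2$ and all $\theta\in\mathbb{R}$,
\[
\bigl|a_n-e^{i\theta}b_n\bigr|\le\frac{2M}{n(n-1)}.
\]
The remaining step is an elementary optimization over the free parameter $\theta$: choosing $e^{i\theta}$ so that $e^{i\theta}b_n$ points opposite to $a_n$ (handling the cases $a_n=0$ or $b_n=0$ trivially) makes the left-hand side equal to $|a_n|+|b_n|$, which is precisely (i). Assertions (ii) and (iii) then follow at once from (i), since $\bigl||a_n|-|b_n|\bigr|\le|a_n|\le|a_n|+|b_n|$.

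For sharpness I would test $f_0=h_0$ with co-analytic part identically $0$ and $h_0'(z)=1-2M\ln(1-z)$. Expanding the logarithm gives $h_0(z)=z+2M\sum_{n\ge2}\frac{z^n}{n(n-1)}$, so $a_n=\frac{2M}{n(n-1)}$ and $b_n=0$; and since $zh_0''(z)=\frac{2Mz}{1-z}$ has real part exceeding $-M$ on $\mathbb{D}$, we get $f_0\in\mathcal{P}_{\mathcal{H}}^0(M)$, and all three inequalities hold with equality for $f_0$.

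The computations here are routine; the only genuine idea is the substitution $H_\theta=h-e^{i\theta}g$, which linearizes the hypothesis, and the single point requiring care is verifying the normalizations $H_\theta'(0)=1$ and $p_\theta(0)=1$ that license the Carath\'eodory estimate — exactly the place where the assumption $g'(0)=0$ enters. I do not anticipate any real obstacle beyond this bookkeeping and the elementary optimization over $\theta$.
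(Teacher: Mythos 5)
Your proof is correct and follows essentially the same route as the paper's proof of its generalization (Theorem 2.3 with $\alpha=1$): both reduce the harmonic hypothesis to a Carath\'eodory-class function $1+\tfrac{1}{M}zH''$ built from an analytic combination $h+\epsilon g$ with $|\epsilon|=1$ (your $\epsilon=-e^{i\theta}$), apply the bound $|p_n|\le 2$, and optimize over the unimodular parameter. One cosmetic slip: the asserted chain $\bigl||a_n|-|b_n|\bigr|\le|a_n|$ is false in general, but (ii) and (iii) still follow from (i) because each left-hand side is separately bounded by $|a_n|+|b_n|$.
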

\begin{theoC} \cite{GA2020} Let $f=h+\ol{g}\in \mathcal{P}_{\mathcal{H}}^0(M)$ for $M >0$ be of the form (\ref{e1}). Then
\beas |z|-2M\sum_{n=2}^{\infty}\frac{|z|^{n}}{n(n-1)}\leq |f(z)|\leq |z|+2M\sum_{n=2}^{\infty}\frac{|z|^n}{n(n-1)}.\eeas
The right-hand inequality is sharp for the function $f$ given by $f'(z)=1-2M\ln(1-z)$.
\end{theoC}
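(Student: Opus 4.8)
The plan is to obtain both inequalities as immediate consequences of the sharp coefficient estimate in Theorem B(i), $|a_n|+|b_n|\le 2M/(n(n-1))$ for $n\ge 2$, together with the triangle inequality; note that $\sum_{n=2}^\infty |z|^n/(n(n-1))$ converges for $z\in\mathbb{D}$ since $\sum_{n=2}^\infty 1/(n(n-1))=1$.

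Write $f=h+\ol g$ with $h(z)=z+\sum_{n=2}^\infty a_nz^n$ and $g(z)=\sum_{n=2}^\infty b_nz^n$. For the upper bound I would use $|f(z)|\le |h(z)|+|g(z)|$ and estimate each Taylor series termwise, so that
\[
|f(z)|\le |z|+\sum_{n=2}^\infty\bigl(|a_n|+|b_n|\bigr)|z|^n\le |z|+2M\sum_{n=2}^\infty\frac{|z|^n}{n(n-1)}
\]
by Theorem B(i). For the lower bound, the triangle inequality first gives $|h(z)|\ge |z|-\sum_{n=2}^\infty|a_n||z|^n$; combining this with $|g(z)|\le\sum_{n=2}^\infty|b_n||z|^n$ and $|f(z)|\ge|h(z)|-|g(z)|$ yields
\[
|f(z)|\ge |z|-\sum_{n=2}^\infty\bigl(|a_n|+|b_n|\bigr)|z|^n\ge |z|-2M\sum_{n=2}^\infty\frac{|z|^n}{n(n-1)},
\]
again by Theorem B(i) (and trivially when the right-hand side is negative).

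It remains to verify sharpness of the right-hand inequality. I would take $g\equiv 0$ and $h'(z)=1-2M\ln(1-z)=1+2M\sum_{n=1}^\infty z^n/n$, so that $h(z)=z+2M\sum_{n=2}^\infty z^n/(n(n-1))$, i.e. $a_n=2M/(n(n-1))$. Since $zh''(z)=2Mz/(1-z)$ maps $\mathbb{D}$ onto the half-plane $\{\,\mathrm{Re}\,w>-M\,\}$, we have $\mathrm{Re}\bigl(zh''(z)\bigr)>-M=-M+|zg''(z)|$, so $f\in\mathcal{P}_{\mathcal{H}}^0(M)$; and at $z=r\in(0,1)$ one gets $|f(r)|=r+2M\sum_{n=2}^\infty r^n/(n(n-1))$, attaining equality. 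There is no substantive obstacle: the whole weight of the theorem sits in Theorem B, and the only point to watch is that — as the statement already indicates — no sharpness is claimed for the left-hand inequality, so the crude termwise bound there is all that is needed.
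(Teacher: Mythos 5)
Your proof is correct, but it takes a genuinely different (and more elementary) route than the paper. The paper never proves Theorem C in isolation: it proves the generalization Theorem \ref{Theo1} for $\mathcal{P}_{\mathcal{H}}^0(\alpha,M)$ and recovers Theorem C at $\alpha=1$. That proof passes through the characterization $F_\epsilon=h+\epsilon g\in\mathcal{P}(\alpha,M)$ for all $|\epsilon|=1$, represents $(1-\alpha)F_\epsilon'+\alpha zF_\epsilon''+M$ via a Schwarz function $\omega$ as $(M-\alpha+1)\frac{1+\omega}{1-\omega}$, derives pointwise upper and lower bounds for $|F_\epsilon'(z)|$ (hence for $|h'|+|g'|$ and $|h'|-|g'|$), and integrates along the radial segment. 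You instead feed the coefficient bound $|a_n|+|b_n|\le 2M/(n(n-1))$ of Theorem B(i) into a termwise triangle inequality on the Taylor series. Both arguments are sound for the statement as quoted, and yours is shorter; your sharpness verification for the right-hand side (taking $g\equiv 0$, $h'(z)=1-2M\ln(1-z)$, and checking $\mathrm{Re}(zh''(z))>-M$ via the half-plane image of $2Mz/(1-z)$) is also correct. What the paper's heavier machinery buys is precisely what your method cannot reach: the termwise estimate $|h(z)|\ge |z|-\sum|a_n||z|^n$ discards all cancellation among the Taylor coefficients, so it can only yield the non-sharp left-hand bound $|z|-2M\sum_{n\ge 2}|z|^n/(n(n-1))$, whereas the derivative-integration argument produces the sharp alternating-sign lower bound $|z|+2M\sum_{n\ge 2}(-1)^{n-1}|z|^n/(n(n-1))$ recorded in Theorem \ref{Theo1} (see the remark following it). Since the statement you were asked to prove claims sharpness only on the right, your argument fully suffices for it.
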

\noindent Motivated by the results of Ghosh and Allu \cite{GA2020}, we consider the class $\mathcal{P}_{\mathcal{H}}^0(\alpha,M)$ of all functions $f=h+\ol{g}\in\mathcal{H}$ satisfying the following conditions:
\beas\text{Re}\left((1-\alpha)h'(z)+ \alpha zh''(z)\right)>-M+\left|(1-\alpha)g'(z)+\alpha zg''(z)\right|~\text{and}~ g'(0)=0\eeas
for $M>0$, $\alpha\in(0,1]$ and $z\in\mathbb{D}$. \\[2mm]
\indent The parameter $\alpha \in (0,1]$ in the definition of $\mathcal{P}_{\mathcal{H}}^0(\alpha,M)$ provides a natural connection between different geometric behaviours. The operator $(1-\alpha)h'(z) + \alpha zh''(z)$ can be interpreted geometrically as a linear combination of the derivative (related to starlikeness) and the second-order operator $zh''(z)$ (related to convexity). When $\alpha = 1$, we recover the class $\mathcal{P}_{\mathcal{H}}^0(M)$ studied by Ghosh and Allu \cite{GA2020},  whereas varying $\alpha$ allows us to investigate a continuous family of classes that connect different geometric properties. This generalization provides a more in-depth understanding of how the local behaviour of harmonic mappings, determined by their derivatives, affects their global geometric properties, such as univalency, starlikeness, and convexity.\\ [2mm]
The organization of this paper is:
In section $2$, we establish the sharp coefficients bounds and growth estimate for functions in $\mathcal{P}_{\mathcal{H}}^0(\alpha,M)$. Furthermore, 
 we conclude the section by determining the radii of univalency, starlikeness, and convexity for functions in the class $\mathcal{P}_{\mathcal{H}}^0(\alpha,M)$. 
 Section $3$ is devoted to the structural properties of the class $\mathcal{P}_{\mathcal{H}}^{0}(\alpha, M)$, specifically under convex combinations and various convolution operations.
\section{The coefficient bounds, growth estimate and Geometric Radii}
\noindent In the following result, we establish the sharp coefficient bounds for the co-analytic part of functions in the class $\mathcal{P}_{\mathcal{H}}^0(\alpha,M)$.
\begin{theo}\label{T2} Let $f=h+\ol{g}\in\mathcal{P}_{\mathcal{H}}^0(\alpha,M)$ be of the form (\ref{e1}) for $M>0$ and $\alpha\in(0,1]$. For $n\geq 2$, we have
\beas |b_n|\leq \frac{M-\alpha+1}{n+\alpha n(n-2)}.\eeas
The result is sharp for the function $f$ given by $f(z)=z+\frac{M-\alpha+1}{n+\alpha n(n-2)}\ol{z^n}$. \end{theo}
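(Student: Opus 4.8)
The plan is to rewrite the defining inequality as a pointwise bound $|Q(z)|<\text{Re}\,P(z)+M$ for two explicit analytic functions $P,Q$, and then read off $|b_n|$ from the Cauchy integral formula for the Taylor coefficients of $Q$, estimating the relevant circular mean by the mean value property so that no constant is wasted. First I would expand in power series: with $h(z)=z+\sum_{n\ge2}a_nz^n$ and (since $g'(0)=0$) $g(z)=\sum_{n\ge2}b_nz^n$, differentiation gives
\[
P(z):=(1-\alpha)h'(z)+\alpha z h''(z)=(1-\alpha)+\sum_{n=2}^{\infty}\lambda_n a_n z^{n-1},\qquad
Q(z):=(1-\alpha)g'(z)+\alpha z g''(z)=\sum_{n=2}^{\infty}\lambda_n b_n z^{n-1},
\]
where $\lambda_n:=n+\alpha n(n-2)=n(1-2\alpha+\alpha n)$; note that $\lambda_n\ge n>0$ for every $n\ge2$ and $\alpha\in(0,1]$, that $P$ and $Q$ are analytic in $\mathbb{D}$ with $\text{Re}\,P(0)=1-\alpha$, and that $f\in\mathcal{P}_{\mathcal{H}}^0(\alpha,M)$ is exactly the condition $|Q(z)|<\text{Re}\,P(z)+M$ on $\mathbb{D}$.

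The heart of the argument is to fix $r\in(0,1)$, recall that $\lambda_n b_n$ is the coefficient of $z^{n-1}$ in $Q$, and apply the Cauchy formula followed by the mean value property of the harmonic function $\text{Re}\,P$:
\[
\lambda_n|b_n|\le\frac{1}{2\pi r^{n-1}}\int_0^{2\pi}\bigl|Q(re^{i\theta})\bigr|\,d\theta
\le\frac{1}{2\pi r^{n-1}}\int_0^{2\pi}\bigl(\text{Re}\,P(re^{i\theta})+M\bigr)\,d\theta
=\frac{\text{Re}\,P(0)+M}{r^{n-1}}=\frac{M-\alpha+1}{r^{n-1}};
\]
letting $r\to1^-$ gives $|b_n|\le(M-\alpha+1)/\lambda_n=(M-\alpha+1)/(n+\alpha n(n-2))$. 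I expect the only genuine obstacle to be carrying out this estimate the right way: the cruder route of passing to $p(z)=(P(z)+M-e^{i\varphi}Q(z))/(M+1-\alpha)$, noting $\text{Re}\,p>0$ and invoking the Carath\'eodory bound $|p_k|\le2$, loses a factor $2$ and only yields $|b_n|\le 2(M-\alpha+1)/\lambda_n$; one must instead integrate $|Q|$ directly and compare it with the harmonic mean of $\text{Re}\,P$, as above.

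For sharpness I would test the function $f(z)=z+\frac{M-\alpha+1}{\lambda_n}\,\overline{z^n}$, i.e.\ $h(z)=z$ and $g(z)=\frac{M-\alpha+1}{\lambda_n}z^n$. Then $P\equiv1-\alpha$, while a one-line computation gives $Q(z)=(M-\alpha+1)z^{n-1}$, so that $\text{Re}\,P(z)+M-|Q(z)|=(M-\alpha+1)(1-|z|^{n-1})>0$ in $\mathbb{D}$ and $g'(0)=0$; hence $f\in\mathcal{P}_{\mathcal{H}}^0(\alpha,M)$ and $|b_n|=(M-\alpha+1)/\lambda_n$, which shows the bound is best possible.
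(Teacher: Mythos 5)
Your proposal is correct and follows essentially the same route as the paper: isolate $Q(z)=(1-\alpha)g'(z)+\alpha zg''(z)$, extract $\lambda_n b_n$ by the Cauchy integral formula, bound $|Q|$ on $|z|=r$ by $\mathrm{Re}\,P+M$, evaluate the circular mean via the mean value property to get $M-\alpha+1$, and let $r\to1^-$; the sharpness example is also the paper's. Your explicit verification that the extremal function lies in the class is slightly more detailed than the paper's, which simply asserts it.
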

\begin{proof} Let $f=h+\ol{g}\in \mathcal{P}_{\mathcal{H}}^0(\alpha,M)$. Therefore, we have
\bea\label{e2} \left|(1-\alpha)g'(z)+\alpha zg''(z)\right|<M+\text{Re}\left((1-\alpha)h'(z)+\alpha zh''(z)\right)\;\;\text{for} \;z\in\mathbb{D}.\eea
Since $(1-\alpha)g'(z)+\alpha zg''(z)=\sum_{n=2}^\infty\left(n+\alpha n(n-2)\right)b_nz^{n-1}$ is analytic in $\mathbb{D}$, in view of the Cauchy's integral formula for derivatives, we have 
\beas \left(n+\alpha n(n-2)\right)b_n=\frac{1}{2\pi i}\int_{|z|=r}\frac{(1-\alpha)g'(z)+\alpha zg''(z)}{z^{n}}\;dz \quad \text{for any}\quad r<1.\eeas
Thus, we have
\bea\left(n+\alpha n(n-2)\right)|b_n|&=&\left|\frac{1}{2\pi i}\int_0^{2\pi}\frac{(1-\alpha)g'(re^{i\theta})+\alpha re^{i\theta}g''(re^{i\theta})}{r^{n}e^{in\theta}}ir e^{i\theta}d\theta\right|\nonumber\\[2mm]
\label{eee2}&\leq&\frac{1}{2\pi}\int_0^{2\pi}\frac{\left|(1-\alpha)g'(re^{i\theta})+\alpha re^{i\theta}g''(re^{i\theta})\right|}{r^{n-1}}d\theta.\eea
By the Mean value property (see \cite{P2005, D2004}), if $\psi$ is analytic in a domain $\Omega\subseteq \Bbb{C}$ containing the closed disk $|z-z_0|\leq \rho$, then $\text{Re}(\psi)$ satisfies:
\bea\label{ee2} \text{Re}(\psi(z_0))=\frac{1}{2\pi}\int_0^{2\pi} \text{Re}(\psi(z_0+\rho e^{i\theta}))\;d\theta.\eea
Using (\ref{e2}) to (\ref{eee2}), we obtain
\beas\left(n+\alpha n(n-2)\right) r^{n-1}|b_n|&\leq&\frac{1}{2\pi}\int_0^{2\pi}\left(M+\text{Re}\left((1-\alpha)h'(re^{i\theta})+\alpha re^{i\theta}h''(re^{i\theta})\right)\right)\;d\theta\\[2mm]
&=&M+\frac{1}{2\pi}\int_0^{2\pi}\text{Re}\left((1-\alpha)h'(re^{i\theta})+\alpha re^{i\theta}h''(re^{i\theta})\right)\;d\theta.\eeas
Applying the mean value property (\ref{ee2}) to the analytic function $(1-\alpha)h'(z)+\alpha zh''(z)=(1-\alpha)+\sum_{n=2}^\infty (n+\alpha n(n-2))a_n z^{n-1}$, we obtain
\beas \frac{1}{2\pi}\int_0^{2\pi}\text{Re}\left((1-\alpha)h'(re^{i\theta})+\alpha re^{i\theta}h''(re^{i\theta})\right)d\theta = 1-\alpha.\eeas
Therefore,
\beas \left(n+\alpha n(n-2)\right) r^{n-1}|b_n| \leq M + (1-\alpha) = M - \alpha + 1.\eeas
Letting $r\to 1^-$ gives the desired bound.\\[2mm]
\indent To prove the sharpness of the result, we consider the function
 $$f(z)=h(z)+\ol{g(z)} = z + \frac{M-\alpha+1}{n+\alpha n(n-2)}\overline{z^n}\quad\text{for}\quad z\in\Bbb{D}\quad\text{and}\quad n\geq2.$$
Therefore, $h(z) = z$ and $g(z) =(M-\alpha+1)/(n+\alpha n(n-2))z^n$. Then, we obtain
\beas (1-\alpha)h'(z) + \alpha zh''(z)=1-\alpha\quad\text{and}\quad(1-\alpha)g'(z) + \alpha zg''(z) = (M-\alpha+1)z^{n-1}.\eeas
For $n\geq 2$ and $z\in\Bbb{D}$, it is evident that 
\beas -M + \left|(1-\alpha)g'(z) + \alpha zg''(z)\right| &=& -M + (M-\alpha+1)|z|^{n-1}\\
&=&(1-\alpha)|z|^{n-1}+M\left(|z|^{n-1}-1\right)\\
&<&1-\alpha=\text{Re}\left((1-\alpha)h'(z) + \alpha zh''(z)\right),\eeas
which shows that $f \in \mathcal{P}_{\mathcal{H}}^{0}(\alpha,M)$ and $|b_n(f)|=(M-\alpha+1)/(n+\alpha n(n-2))$.
This completes the proof.\end{proof}
\noindent Let $\mathcal{P}(\alpha,M)$ denote the class of functions $\phi\in \mathcal{A}$ that satisfy the following condition:
\beas \text{Re}\left((1-\alpha)\phi'(z)+ \alpha z\phi''(z)\right)>-M,~\text{where}~M >0, \alpha\in(0,1]~\text{and}~ z\in\mathbb{D}.\eeas
In the following result, we establish a correlation between the functions in $\mathcal{P}(\alpha,M)$ and $\mathcal{P}_{\mathcal{H}}^0(\alpha, M)$.
\begin{theo}\label{T1}
A harmonic mapping $f=h+\ol{g}$ belongs to $\mathcal{P}_{\mathcal{H}}^0(\alpha,M)$ if, and only if, the
function $F_\epsilon=h +\epsilon g\in\mathcal{P}(\alpha,M$) for each $\epsilon$ $(|\epsilon|=1)$.
\end{theo}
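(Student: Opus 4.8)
The plan is to reduce both membership conditions to pointwise inequalities in the same analytic expressions and then exploit the elementary fact that $\operatorname{Re}(\epsilon w)$ sweeps the interval $[-|w|,|w|]$ as $\epsilon$ runs over the unit circle. Write $A(z)=(1-\alpha)h'(z)+\alpha z h''(z)$ and $B(z)=(1-\alpha)g'(z)+\alpha z g''(z)$; by linearity of these operators, $(1-\alpha)F_\epsilon'(z)+\alpha z F_\epsilon''(z)=A(z)+\epsilon B(z)$. First I would dispose of the normalization: since $h(0)=g(0)=0$ and $h'(0)=1$, we have $F_\epsilon(0)=0$ and $F_\epsilon'(0)=1+\epsilon g'(0)$, so $F_\epsilon\in\mathcal{A}$ for every $\epsilon$ with $|\epsilon|=1$ if and only if $g'(0)=0$, which is exactly the side condition in the definition of $\mathcal{P}_{\mathcal{H}}^0(\alpha,M)$.

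For the necessity direction, assume $f\in\mathcal{P}_{\mathcal{H}}^0(\alpha,M)$. Fix $z\in\mathbb{D}$ and any $\epsilon$ with $|\epsilon|=1$. Then $\operatorname{Re}\bigl(A(z)+\epsilon B(z)\bigr)\ge \operatorname{Re}A(z)-|\epsilon B(z)|=\operatorname{Re}A(z)-|B(z)|>-M$ by the inequality defining $\mathcal{P}_{\mathcal{H}}^0(\alpha,M)$; together with $F_\epsilon\in\mathcal{A}$ this gives $F_\epsilon\in\mathcal{P}(\alpha,M)$.

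For the sufficiency direction, assume $F_\epsilon\in\mathcal{P}(\alpha,M)$ for every unit $\epsilon$; in particular $g'(0)=0$. Fix $z\in\mathbb{D}$. If $B(z)=0$, then taking $\epsilon=1$ yields $\operatorname{Re}A(z)>-M=-M+|B(z)|$. If $B(z)\ne 0$, choose $\epsilon=-\overline{B(z)}/|B(z)|$, which has modulus one and satisfies $\epsilon B(z)=-|B(z)|$; applying the hypothesis for this $\epsilon$ at this $z$ gives $\operatorname{Re}A(z)-|B(z)|=\operatorname{Re}\bigl(A(z)+\epsilon B(z)\bigr)>-M$, i.e.\ $\operatorname{Re}A(z)>-M+|B(z)|$. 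Since $z\in\mathbb{D}$ was arbitrary, $f$ satisfies the defining inequality of $\mathcal{P}_{\mathcal{H}}^0(\alpha,M)$, and with $g'(0)=0$ we conclude $f\in\mathcal{P}_{\mathcal{H}}^0(\alpha,M)$.

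The argument is a short reduction once the optimal $\epsilon$ is selected, so I do not expect a genuine obstacle. The only points deserving care are that in the sufficiency direction the chosen $\epsilon$ is allowed to depend on $z$ (legitimate, because the hypothesis asserts $F_\epsilon\in\mathcal{P}(\alpha,M)$ for all unit $\epsilon$ simultaneously, hence for each fixed $z$ and the worst-case $\epsilon$), and the separate, trivial treatment of the degenerate case $B(z)=0$.
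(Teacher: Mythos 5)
Your proof is correct and follows essentially the same route as the paper's: linearity gives $(1-\alpha)F_\epsilon'+\alpha zF_\epsilon''=A+\epsilon B$, the forward direction uses $\operatorname{Re}(\epsilon B)\ge -|B|$, and the converse picks the unit $\epsilon$ that rotates $B(z)$ onto $-|B(z)|$. Your version is in fact slightly more careful than the paper's, which only says ``for an appropriate choice of $\epsilon$'' where you make the choice explicit, treat the case $B(z)=0$, and note that $\epsilon$ may depend on $z$ and that the normalization $g'(0)=0$ matches $F_\epsilon\in\mathcal{A}$.
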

\begin{proof} Suppose that $f=h+\ol{g}\in \mathcal{P}_{\mathcal{H}}^0(\alpha,M)$. Thus, we have
\bea\label{es1}\text{Re}\left((1-\alpha)h'(z)+ \alpha zh''(z)\right)>-M+\left|(1-\alpha)g'(z)+\alpha zg''(z)\right|\;\text{for}\;z\in\mathbb{D}.\eea
Fix $|\epsilon|=1$. Since $F_\epsilon=h +\epsilon g$, we deduce by using (\ref{es1}) that
\beas\text{Re}\left((1-\alpha)F_{\epsilon}'(z)+ \alpha zF_{\epsilon}''(z)\right)
&=&\text{Re}\left(\left((1-\alpha)h'(z)+ \alpha zh''(z)\right)\right.\\[2mm]&&\left.+\epsilon\left((1-\alpha)g'(z)+\alpha zg''(z)\right)\right)\\[2mm]
&\geq& \text{Re}\left((1-\alpha)h'(z)+ \alpha zh''(z)\right)\\[2mm]&&-\left|(1-\alpha)g'(z)+\alpha zg''(z)\right|\\[2mm]
&>&-M \quad\text{for} \quad z\in\mathbb{D}. \eeas
Therefore, $F_\epsilon=h +\epsilon g\in\mathcal{P}(\alpha,M)$ for each $\epsilon$ with $|\epsilon|=1$.
Conversely, if $F_\epsilon\in\mathcal{P}(\alpha,M)$, then for $z\in\mathbb{D}$, we have
\beas&& \qquad\text{Re}\left((1-\alpha)F_{\epsilon}'(z)+ \alpha zF_{\epsilon}''(z)\right)>-M,\\[2mm]
&&\text{\it i.e.,}\quad\text{Re}\left(\left((1-\alpha)h'(z)+ \alpha zh''(z)\right)+\epsilon\left((1-\alpha)g'(z)+\alpha zg''(z)\right)\right)>-M,\\[2mm]
&&\text{\it i.e.,}\quad\text{Re}\left((1-\alpha)h'(z)+ \alpha zh''(z)\right)>-M+\text{Re}\left(-\epsilon\left((1-\alpha)g'(z)+\alpha zg''(z)\right)\right). \eeas
Let $p(z) = (1-\alpha)g^{\prime}(z)+\alpha zg^{\prime\prime}(z)$ and write $p(z) = |p(z)| e^{i\theta(z)}$. Since $|\epsilon|=1$, we can choose $\epsilon = -e^{-i\theta(z)}$. Then, we have
\beas -\epsilon p(z) = e^{-i\theta(z)} \cdot |p(z)| e^{i\theta(z)} = |p(z)|,\eeas
which shows that $\text{Re}(-\epsilon p(z)) = |p(z)|$. Substituting this specific $\epsilon$ into the inequality gives
\beas \text{Re}\left((1-\alpha)h'(z)+ \alpha zh''(z)\right)>-M+\left|(1-\alpha)g'(z)+\alpha zg''(z)\right|\quad\text{for} \quad z\in\mathbb{D}, \eeas
which shows that $f\in\mathcal{P}_{\mathcal{H}}^0(\alpha,M)$. This completes the proof.\end{proof}
\noindent In the following result, we establish the sharp coefficient bounds for functions in the class $\mathcal{P}_{\mathcal{H}}^0(\alpha,M)$.
\begin{theo}\label{T3} Let $f=h+\ol{g}\in\mathcal{P}_{\mathcal{H}}^0(\alpha,M)$ be of the form (\ref{e1}) for $M>0$ and $\alpha\in(0,1]$. For $n\geq 2$, we have 
\begin{enumerate}
\item[(i)]  $|a_n|+|b_n|\leq \dfrac{2(M-\alpha+1)}{n+\alpha n(n-2)}$;
\item[(ii)] $\left||a_n|-|b_n|\right|\leq \dfrac{2(M-\alpha+1)}{n+\alpha n(n-2)}$;
\item[(iii)] $|a_n|\leq \dfrac{2(M-\alpha+1)}{n+\alpha n(n-2)}$.\end{enumerate}
The results are sharp for the function $f$ given by $f(z)=z+\sum_{n=2}^\infty \frac{2(M-\alpha+1)}{n+\alpha n(n-2)}z^n$.\end{theo}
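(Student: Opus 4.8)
The plan is to reduce everything to a Carath\'eodory-type coefficient estimate for the associated analytic class $\mathcal{P}(\alpha,M)$ and then transfer it to the harmonic setting via Theorem \ref{T1}.

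First I would prove the auxiliary estimate: if $\phi(z)=z+\sum_{n=2}^\infty c_nz^n\in\mathcal{P}(\alpha,M)$, then $|c_n|\le \frac{2(M-\alpha+1)}{n+\alpha n(n-2)}$ for $n\ge 2$. Setting $\Phi(z):=(1-\alpha)\phi'(z)+\alpha z\phi''(z)$, a direct computation gives $\Phi(z)=(1-\alpha)+\sum_{n=2}^\infty\bigl(n+\alpha n(n-2)\bigr)c_nz^{n-1}$, and by hypothesis $\text{Re}\,\Phi(z)>-M$ with $\Phi(0)=1-\alpha$. Since $M-\alpha+1\ge M>0$, the function $p(z):=\frac{\Phi(z)+M}{M-\alpha+1}$ has positive real part in $\mathbb{D}$ and satisfies $p(0)=1$, so all its Taylor coefficients are bounded in modulus by $2$. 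Reading off the coefficient of $z^{n-1}$ yields $\frac{(n+\alpha n(n-2))|c_n|}{M-\alpha+1}\le 2$, which is the claimed bound (note $n+\alpha n(n-2)=n(1+\alpha(n-2))>0$ for $n\ge 2$).

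Next I would invoke Theorem \ref{T1}. Writing $h(z)=z+\sum_{n\ge2}a_nz^n$ and $g(z)=\sum_{n\ge2}b_nz^n$ (the linear term of $g$ vanishes because $g'(0)=0$), for each unimodular $\epsilon$ the function $F_\epsilon=h+\epsilon g=z+\sum_{n\ge2}(a_n+\epsilon b_n)z^n$ lies in $\mathcal{P}(\alpha,M)$, so the auxiliary estimate gives $|a_n+\epsilon b_n|\le\frac{2(M-\alpha+1)}{n+\alpha n(n-2)}$ for every $|\epsilon|=1$. Choosing $\epsilon$ so that $\epsilon b_n$ has the same argument as $a_n$ makes $|a_n+\epsilon b_n|=|a_n|+|b_n|$, which proves (i); parts (ii) and (iii) then follow at once since $\bigl||a_n|-|b_n|\bigr|\le|a_n|+|b_n|$ and $|a_n|\le|a_n|+|b_n|$.

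Finally, for sharpness I would verify that $f(z)=z+\sum_{n=2}^\infty\frac{2(M-\alpha+1)}{n+\alpha n(n-2)}z^n$ (so that $g\equiv0$) belongs to $\mathcal{P}_{\mathcal{H}}^0(\alpha,M)$: here $(1-\alpha)h'(z)+\alpha zh''(z)=(1-\alpha)+2(M-\alpha+1)\frac{z}{1-z}$, and using $\text{Re}\,\frac{z}{1-z}>-\tfrac12$ on $\mathbb{D}$ one obtains $\text{Re}\bigl((1-\alpha)h'(z)+\alpha zh''(z)\bigr)>(1-\alpha)-(M-\alpha+1)=-M$, which together with $g'\equiv0$ confirms the defining inequality; for this $f$ all three estimates hold with equality. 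The only genuine work is the auxiliary Carath\'eodory estimate for $\mathcal{P}(\alpha,M)$ --- once that is established, the remainder is bookkeeping, and I do not anticipate any real obstacle.
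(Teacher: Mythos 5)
Your proposal is correct and follows essentially the same route as the paper: both reduce to $F_\epsilon=h+\epsilon g\in\mathcal{P}(\alpha,M)$ via Theorem \ref{T1}, apply the Carath\'eodory bound $|p_n|\le 2$ to $p(z)=\bigl((1-\alpha)F_\epsilon'(z)+\alpha zF_\epsilon''(z)+M\bigr)/(M-\alpha+1)$, and exploit the freedom in $\epsilon$ to extract $|a_n|+|b_n|$. Your only organizational difference is isolating the analytic-class estimate as a separate lemma and spelling out the sharpness verification (which the paper merely asserts); the mathematics is identical.
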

\begin{proof} Let $f=h+\ol{g}\in \mathcal{P}_{\mathcal{H}}^0(\alpha,M)$. In view of \textrm{Theorem \ref{T1}}, we have $F_{\epsilon}=h+\epsilon g\in  \mathcal{P}(\alpha,M)$ for each $\epsilon$ $(|\epsilon|=1)$. Therefore, we have
\beas \text{Re}\left((1-\alpha)F_{\epsilon}'(z)+\alpha zF_{\epsilon}''(z)+M\right)>0 \quad \text{for}\quad z\in\mathbb{D}.\eeas
Thus there exists an analytic function $p(z)=1+\sum_{n=1}^\infty p_nz^n$ in $\mathbb{D}$ with $\text{Re}\;p(z)>0$ in $\mathbb{D}$ and $p(0)=1$ such that
\bea&&\frac{(1-\alpha)F_{\epsilon}'(z)+\alpha zF_{\epsilon}''(z)+M}{M-\alpha+1}=p(z)\nonumber\\[1mm]\text{\it i.e.,}
&&\frac{(M-\alpha+1)+\sum_{n=2}^{\infty}\left(n+\alpha (n^2-2n)\right)\left(a_n+\epsilon b_n\right)z^{n-1}}{M-\alpha+1}=1+\sum_{n=1}^\infty p_nz^n\nonumber\\[1mm]\text{\it i.e.,}
&&\label{e3}\sum_{n=1}^{\infty}\left(n+1+\alpha (n^2-1)\right)\left(a_{n+1}+\epsilon b_{n+1}\right)z^n=(M-\alpha+1)\sum_{n=1}^\infty p_nz^n.\eea
Comparing coefficients on both sides of (\ref{e3}), we have 
\bea\label{e4}\left(n+1+\alpha (n^2-1)\right)\left(a_{n+1}+\epsilon b_{n+1}\right)=(M-\alpha+1)p_n\;\;\text{for}\;n\geq 1.\eea
By the well-known coefficient estimates for functions with positive real parts (see \cite{C1907,TTA2018}), we have  $|p_n|\leq 2$ for $n\geq 1$. Given that $\epsilon$ $(|\epsilon|=1)$ is arbitrary, it follows from (\ref{e4}) that
\beas &&\left(n+1+\alpha (n^2-1)\right)(|a_{n+1}|+|b_{n+1}|)\leq 2(M-\alpha+1)\\[2mm]\text{and}
&&\left(n+1+\alpha (n^2-1)\right)||a_{n+1}|-|b_{n+1}||\leq 2(M-\alpha+1).\eeas
Therefore, for $n\geq 2$, we have
\beas |a_{n}|+|b_{n}|\leq \frac{2(M-\alpha+1)}{n+\alpha (n^2-2n)}, ||a_{n}|-|b_{n}||\leq \frac{2(M-\alpha+1)}{n+\alpha (n^2-2n)}\;\text{and}\;|a_n|\leq \frac{2(M-\alpha+1)}{n+\alpha (n^2-2n)}.\eeas
To prove the sharpness of these bounds, we consider the function $f$ defined by 
\bea\label{extremal}f(z)=h(z)+\ol{g(z)}=z+\sum_{n=2}^\infty \frac{2(M-\alpha+1)}{n+\alpha (n^2-2n)}z^n,\quad z\in\Bbb{D}.\eea
It is clear that $g(z)\equiv 0$ for $z\in\Bbb{D}$ and for $n\geq 2$, we have 
\beas a_n = \frac{2(M-\alpha+1)}{n+\alpha n(n-2)} \quad \text{and}\quad b_n = 0.\eeas
Evidently,
\beas
(1-\alpha)h'(z) + \alpha zh''(z)&=&
(1-\alpha) + \sum_{n=2}^{\infty} 2(M-\alpha+1) z^{n-1} \\
&=& (1-\alpha) + 2(M-\alpha+1) \frac{z}{1-z}.\eeas
Since $\text{Re}\left(\dfrac{z}{1-z}\right) > -\dfrac{1}{2}$ for $z \in \mathbb{D}$, we have
\beas \text{Re}\left((1-\alpha)h'(z)+\alpha zh''(z)\right) > (1-\alpha) - (M-\alpha+1) = -M,\eeas
which shows that $f \in \mathcal{P}^{0}_{\mathcal{H}}(\alpha,M)$. 
For the function $f$ given in (\ref{extremal}), we observe that
\begin{align*}
|a_n| + |b_n| &= \frac{2(M-\alpha+1)}{n+\alpha n(n-2)} + 0 = \frac{2(M-\alpha+1)}{n+\alpha n(n-2)}, \\
\left| |a_n| - |b_n| \right| &= \left| \frac{2(M-\alpha+1)}{n+\alpha n(n-2)} - 0 \right| = \frac{2(M-\alpha+1)}{n+\alpha n(n-2)}, \\
|a_n| &= \frac{2(M-\alpha+1)}{n+\alpha n(n-2)}.
\end{align*}
All bounds are attained with equality, thereby confirming their sharpness. This completes the proof.\end{proof}
\noindent These sharp coefficient bounds naturally lead to the question of the growth of functions within the class $\mathcal{P}_{\mathcal{H}}^0(\alpha,M)$. In the following result, we address this inquiry.
\begin{theo}\label{Theo1}  Let $f=h+\ol{g}\in\mathcal{P}_{\mathcal{H}}^0(\alpha,M)$ be of the form (\ref{e1}) for $M>0$ and $\alpha\in(0,1]$. Then, 
\be\label{eq1}|z|+2(M-\alpha+1)\sum_{n=2}^{\infty}\frac{(-1)^{n-1}|z|^{n}}{n(1+\alpha (n-2))}\leq |f(z)|\leq |z|+2(M-\alpha+1)\sum_{n=2}^{\infty}\frac{|z|^n}{n(1+\alpha (n-2))}.\ee
For each $z\in\mathbb{D}$, $z\not=0$, equality occurs for the function $f$ given by $f(z)=z+2(M-\alpha+1)\sum_{n=2}^{\infty}\frac{z^n}{n(1+\alpha (n-2))}$ or its suitable rotations.
\end{theo}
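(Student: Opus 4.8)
The plan is to establish the two inequalities of (\ref{eq1}) separately, the right-hand one being essentially immediate and the left-hand one carrying the real content. For the upper bound, writing $h(z)=z+\sum_{n\ge 2}a_nz^n$ and $g(z)=\sum_{n\ge 2}b_nz^n$, the triangle inequality gives $|f(z)|\le|h(z)|+|g(z)|\le|z|+\sum_{n\ge2}\big(|a_n|+|b_n|\big)|z|^n$, and Theorem \ref{T3}(i) bounds each $|a_n|+|b_n|$ by $2(M-\alpha+1)/\big(n+\alpha n(n-2)\big)=2(M-\alpha+1)/\big(n(1+\alpha(n-2))\big)$, which is exactly the claimed majorant. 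The key remark is that the corresponding \emph{lower} triangle estimate, $|f(z)|\ge|z|-\sum_{n\ge2}(|a_n|+|b_n|)|z|^n$, only yields $|z|-2(M-\alpha+1)\sum_{n\ge2}|z|^n/(n(1+\alpha(n-2)))$, which is strictly weaker than the alternating-sign bound to be proved; hence for the lower bound one must exploit the defining functional inequality itself, not merely its coefficient consequences.

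For the left-hand inequality I would first reduce to the analytic class $\mathcal P(\alpha,M)$ by means of Theorem \ref{T1}. Fix $z_0\in\mathbb D\setminus\{0\}$ and choose a unimodular constant $\epsilon_0$ with $\epsilon_0 g(z_0)=\overline{g(z_0)}$ (possible since $|\overline{g(z_0)}|=|g(z_0)|$, with any $\epsilon_0$ admissible when $g(z_0)=0$). Then $F_{\epsilon_0}(z_0)=h(z_0)+\epsilon_0 g(z_0)=f(z_0)$, and by Theorem \ref{T1} the function $\phi:=F_{\epsilon_0}=h+\epsilon_0 g$ belongs to $\mathcal P(\alpha,M)$, so $|f(z_0)|=|\phi(z_0)|$. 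Thus it suffices to prove that every $\phi\in\mathcal P(\alpha,M)$ satisfies $|\phi(z)|\ge|z|+2(M-\alpha+1)\sum_{n\ge2}(-1)^{n-1}|z|^n/(n(1+\alpha(n-2)))$ for all $z\in\mathbb D$.

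For such a $\phi$, put $p(z)=\big((1-\alpha)\phi'(z)+\alpha z\phi''(z)+M\big)/(M-\alpha+1)$; exactly as in the proof of Theorem \ref{T3}, $p$ is analytic with $p(0)=1$ and $\mathrm{Re}\,p>0$ in $\mathbb D$, and the $n$-th Taylor coefficient of $\phi$ equals $(M-\alpha+1)p_{n-1}/\big(n(1+\alpha(n-2))\big)$. I would then solve the first-order linear ODE $\alpha z\phi''(z)+(1-\alpha)\phi'(z)=(M-\alpha+1)p(z)-M$ (integrating factor $z^{(1-\alpha)/\alpha}$) and integrate once more to get
\beas\phi(z)=z+\frac{(M-\alpha+1)\,z}{\alpha}\int_0^1\!\!\int_0^1 s^{\frac{1-\alpha}{\alpha}-1}\big(p(s\tau z)-1\big)\,ds\,d\tau,\eeas
which can equally be verified directly by expanding $p(s\tau z)-1=\sum_{k\ge1}p_k(s\tau z)^k$ and matching the coefficient formula (for $\alpha=1$ this is the limiting case, with weight $s^{-1}$, integrable near $s=0$ since $p(0)=1$). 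Writing $z=re^{i\theta}$ and using $|\phi(z)|\ge\mathrm{Re}\big(e^{-i\theta}\phi(z)\big)$ together with the classical Herglotz--Harnack bound $\mathrm{Re}\,p(w)\ge(1-|w|)/(1+|w|)$ applied \emph{pointwise} along the segment $[0,z]$, one obtains $\mathrm{Re}\,p(s\tau z)-1\ge -2s\tau r/(1+s\tau r)$ and hence
\beas\mathrm{Re}\big(e^{-i\theta}\phi(z)\big)\ \ge\ r-\frac{2(M-\alpha+1)\,r}{\alpha}\int_0^1\!\!\int_0^1\frac{s^{\frac{1-\alpha}{\alpha}}\,\tau\,r}{1+s\tau r}\,ds\,d\tau.\eeas
Expanding $(1+s\tau r)^{-1}$ as a geometric series and integrating term by term turns the right-hand side into $r+2(M-\alpha+1)\sum_{n\ge2}(-1)^{n-1}r^n/\big(n(1+\alpha(n-2))\big)$, i.e.\ precisely the bound sought; equality holds throughout for $p(w)=(1-e^{-i\theta}w)/(1+e^{-i\theta}w)$, which corresponds to a rotation of $\phi_0(z):=z+2(M-\alpha+1)\sum_{n\ge2}z^n/\big(n(1+\alpha(n-2))\big)$.

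For sharpness I would note that $(1-\alpha)\phi_0'(z)+\alpha z\phi_0''(z)=(1-\alpha)+2(M-\alpha+1)\,z/(1-z)$, whose real part exceeds $-M$ on $\mathbb D$, so $f_0:=\phi_0$ (co-analytic part $\equiv 0$) lies in $\mathcal P_{\mathcal H}^0(\alpha,M)$ and realizes the right-hand bound at $z=r$ and the left-hand bound at $z=-r$; since $\mathcal P_{\mathcal H}^0(\alpha,M)$ is invariant under $f(z)\mapsto e^{-i\gamma}f(e^{i\gamma}z)$ (a direct check using $g'(0)=0$), suitable rotations of $f_0$ give equality at each $z\ne 0$. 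The step I expect to be the genuine obstacle is this passage from the crude triangle estimate to the sharp alternating one: it is exactly what forces the integral representation of $\phi$ and the \emph{pointwise} control of $\mathrm{Re}\,p$ along $[0,z]$ — coefficientwise bounds alone are provably too weak — and it requires some additional care to keep everything uniform down to the boundary case $\alpha=1$.
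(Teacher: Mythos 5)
Your proof is correct, but it takes a genuinely different route from the paper's. The paper also reduces to $F_\epsilon=h+\epsilon g\in\mathcal P(\alpha,M)$, but then works at the level of derivatives: it integrates the identity $\frac{d}{dz}\left(\alpha z^{1/\alpha-1}F_\epsilon'\right)=(1-\alpha)z^{1/\alpha-2}\frac{1+\omega}{1-\omega}+2Mz^{1/\alpha-2}\frac{\omega}{1-\omega}$ with a Schwarz function $\omega$, extracts the two-sided bounds $|h'|+|g'|\le\cdots$ and $|h'|-|g'|\ge\cdots$ by letting $\epsilon$ vary, and then integrates these along the radial segment to reach $|f(z)|$. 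You instead (a) dispatch the upper bound directly from the coefficient estimates of Theorem \ref{T3}(i), which is simpler and entirely adequate, and (b) for the lower bound choose $\epsilon_0$ so that $F_{\epsilon_0}(z_0)=f(z_0)$ exactly, derive a closed-form double-integral representation of $F_{\epsilon_0}$ in terms of the Herglotz function $p$, and bound $\mathrm{Re}\left(e^{-i\theta}F_{\epsilon_0}(z)\right)$ from below by Harnack's inequality; I have checked that your kernel computation reproduces exactly the alternating series in (\ref{eq1}). Your route buys a cleaner lower bound, since it avoids the step $|f(z)|\ge\int_{\Gamma}\left(|h'(\xi)|-|g'(\xi)|\right)|d\xi|$ over the radial segment, which as written in the paper needs extra justification (one normally takes $\Gamma$ to be the preimage of a line segment in the image); the paper's route, in exchange, produces the pointwise derivative bounds on $|h'|\pm|g'|$ as intermediate results of independent interest. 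One shared imprecision worth noting: both you and the paper assert equality on the left of (\ref{eq1}) for $f_0$ at $z=-r$, but $|f_0(-r)|$ coincides with the stated lower bound only when that bound is nonnegative; for large $M$ it is negative and the inequality, while still true, is strict.
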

\begin{proof} Let $f=h+\ol{g}\in \mathcal{P}_{\mathcal{H}}^0(\alpha,M)$ for $M>0$ and $\alpha\in(0,1]$. Then, $F_{\epsilon}=h+\epsilon g\in  \mathcal{P}(\alpha,M)$ for each $\epsilon$ with $|\epsilon|=1$ and it follows that
\beas \text{Re}\left((1-\alpha)F_{\epsilon}'(z)+\alpha zF_{\epsilon}''(z)+M\right)>0 \;\;\text{for}\;\; z\in\mathbb{D}.\eeas
Thus, there exists an analytic function $\omega : \mathbb{D}\to \mathbb{D}$ with $\omega(0)=0$ such that
\bea\label{e5}&& \frac{(1-\alpha)F_{\epsilon}'(z)+\alpha zF_{\epsilon}''(z)+M}{M-\alpha+1}=\frac{1+\omega(z)}{1-\omega(z)},\nonumber\\[1mm]\text{\it i.e.,}
&& (1-\alpha)F_{\epsilon}'(z)+\alpha zF_{\epsilon}''(z)=(M-\alpha+1)\frac{1+\omega(z)}{1-\omega(z)}-M,\nonumber\\[1mm]\text{\it i.e.,}
&& z^{\frac{1}{\alpha}-2}\left((1-\alpha)F_{\epsilon}'(z)+\alpha zF_{\epsilon}''(z)\right)=(1-\alpha)z^{\frac{1}{\alpha}-2}\frac{1+\omega(z)}{1-\omega(z)}+2Mz^{\frac{1}{\alpha}-2}\frac{\omega(z)}{1-\omega(z)},\nonumber\\[1mm]\text{\it i.e.,}
&&\frac{d}{dz}\left(\alpha z^{\frac{1}{\alpha}-1}F_{\epsilon}'(z)\right)=(1-\alpha)z^{\frac{1}{\alpha}-2}\frac{1+\omega(z)}{1-\omega(z)}+2Mz^{\frac{1}{\alpha}-2}\frac{\omega(z)}{1-\omega(z)}.\eea
Note that $1/\alpha-1\geq 0$ and $z^{\frac{1}{\alpha}-1}=\mathrm{exp}((1/\alpha-1)\log(z))$, where the branch of the logarithm is determined by $\log(1)=0$. This ensure that the function is single-valued and analytic. Hence, the function $z^{\frac{1}{\alpha}-1}$ is differentiable (see \cite{P2005}).
Using the Schwarz Lemma, we have $|\omega(z)|\leq|z|$ for $z\in\mathbb{D}$. We consider the following cases.\\[1mm]
{\bf Case 1.} Let $\alpha\in(0,1)$. Using $F_{\epsilon}'(0)=1$, from (\ref{e5}), we obtain 
\bea\label{ew1}\alpha z^{\frac{1}{\alpha}-1}F_{\epsilon}'(z)=(1-\alpha)\int_{0}^z \xi^{\frac{1}{\alpha}-2}\frac{1+\omega(\xi)}{1-\omega(\xi)}\; d\xi+2M\int_{0}^z \xi^{\frac{1}{\alpha}-2}\frac{\omega(\xi)}{1-\omega(\xi)}\; d\xi.\eea
From (\ref{ew1}), we have 
\beas&&\left|\alpha z^{\frac{1}{\alpha}-1}F_{\epsilon}'(z)\right|\nonumber\\[2mm]
&=&\left|(1-\alpha)\int_{0}^{|z|} \left(te^{i\theta}\right)^{\frac{1}{\alpha}-2}\frac{1+\omega(te^{i\theta})}{1-\omega(te^{i\theta})} e^{i\theta}dt+2M\int_{0}^{|z|} \left(te^{i\theta}\right)^{\frac{1}{\alpha}-2}\frac{\omega(te^{i\theta})}{1-\omega(te^{i\theta})} e^{i\theta}dt\right|\nonumber\\[1mm]
&\leq &(1-\alpha)\int_{0}^{|z|}t^{\frac{1}{\alpha}-2}\frac{1+t}{1-t} dt+2M\int_{0}^{|z|}t^{\frac{1}{\alpha}-2}\frac{t}{1-t} dt\nonumber\\[1mm]
&\leq &(1-\alpha)\int_{0}^{|z|}\left(t^{\frac{1}{\alpha}-2}+2t^{\frac{1}{\alpha}-1}\sum_{n=0}^{\infty}t^n dt\right)+2M\int_{0}^{|z|}t^{\frac{1}{\alpha}-1}\sum_{n=0}^{\infty}t^n dt\nonumber\\[1mm]
&=&(1-\alpha)\frac{|z|^{\frac{1}{\alpha}-1}}{\frac{1}{\alpha}-1}+2(M-\alpha+1)\sum_{n=0}^{\infty}\frac{|z|^{\frac{1}{\alpha}+n}}{\frac{1}{\alpha}+n}.\nonumber\eeas
Therefore, we have
\bea\label{e6} |F_{\epsilon}'(z)|=|h'(z)+\epsilon g'(z)|&\leq& 1+2(M-\alpha+1)\sum_{n=0}^{\infty}\frac{|z|^{n+1}}{1+\alpha n}\nonumber\\
&=& 1+2(M-\alpha+1)\sum_{n=1}^{\infty}\frac{|z|^{n}}{1+\alpha (n-1)}.\eea
Since (\ref{e6}) holds for all $\epsilon$ with $|\epsilon|=1$, thus, we have
\beas |h'(z)|+|g'(z)|\leq 1+2(M-\alpha+1)\sum_{n=1}^{\infty}\frac{|z|^{n}}{1+\alpha (n-1)}. \eeas
Let $z\in\Bbb{D}$ and $\Gamma$ be the radial segment from $0$ to $z$. Therefore, we have 
\beas|f(z)|&=&\left|\int_{\Gamma}\left(\frac{\partial f}{\partial \xi}d\xi+\frac{\partial f}{\partial \ol{\xi}}d\ol{\xi}\right)\right|\\[1mm]
&=& \left|\int_\Gamma\left(h'(\xi)d\xi + \ol{g'(\xi)}d\ol{\xi}\right)\right|\\[1mm]
&\leq& \int_{\Gamma}\left(|h'(\xi)|+|g'(\xi)|\right)|d\xi|\\[1mm]
&=& \int_{0}^{|z|}\left(|h^{\prime}(te^{i\theta})| + |g^{\prime}(te^{i\theta})|\right)dt \\[1mm]
&\leq& \int_0^{|z|}\left(1+2(M-\alpha+1)\sum_{n=1}^{\infty}\frac{t^{n}}{1+\alpha (n-1)}\right)dt\\[1mm]
&=&|z|+2(M-\alpha+1)\sum_{n=2}^{\infty}\frac{|z|^n}{n(1+\alpha (n-2))}.\eeas
Applying (\ref{ew1}), we have 
\bea\left|\alpha z^{\frac{1}{\alpha}-1}F_{\epsilon}'(z)\right|&\geq & \text{Re}\left(\alpha z^{\frac{1}{\alpha}-1}F_{\varepsilon}^{\prime}(z)\right) \nonumber\\
&=& (1-\alpha)\int_{0}^{|z|}t^{\frac{1}{\alpha}-2}\text{Re}\left(\frac{1+\omega(te^{i\theta})}{1-\omega(te^{i\theta})}\right) \;dt\nonumber\\
\label{ew2}&&+2M\int_{0}^{|z|} t^{\frac{1}{\alpha}-2}\text{Re}\left(\frac{\omega(te^{i\theta})}{1-\omega(te^{i\theta})}\right)\; dt.\eea
Evidently,
\bea\label{e7}\text{Re}\left(\frac{1+\omega(z)}{1-\omega(z)}\right)&=&\frac{1}{2}\left(\frac{1+\omega(z)}{1-\omega(z)}+\frac{1+\ol{\omega(z)}}{1-\ol{\omega(z)}}\right)\nonumber\\[2mm]
&=&\frac{1-|\omega(z)|^2}{|1-\omega(z)|^2}\geq \frac{(1+|\omega(z)|)(1-|\omega(z)|)}{(1+|\omega(z)|)^2}\geq\frac{1-|z|}{1+|z|}.\eea
Furthermore, note that
\beas&&\text{Re}(\omega(z))\geq -|\omega(z)|,\quad\text{\it i.e.,}\quad  \text{Re}(\omega(z))-|\omega(z)|^2\geq -|\omega(z)|(1+|\omega(z)|)\\[2mm]\text{and}
&&\frac{\text{Re}(\omega(z))-|\omega(z)|^2}{|1-\omega(z)|^2}\geq \frac{-|\omega(z)|(1+|\omega(z)|)}{\left(1+|\omega(z)|\right)^2}=\frac{-|\omega(z)|}{1+|\omega(z)|}.\eeas
Hence, we have
\bea\label{r1} \text{Re}\left(\frac{\omega(z)}{1-\omega(z)}\right)&=&\frac{\text{Re}(\omega(z))-|\omega(z)|^2}{|1-\omega(z)|^2}\geq\frac{-|z|}{1+|z|}.\eea
From (\ref{ew2}), (\ref{e7}) and (\ref{r1}), we have
\beas&&|\alpha z^{\frac{1}{\alpha}-1}F_{\epsilon}'(z)|\\
&\geq& (1-\alpha)\int_{0}^{|z|}t^{\frac{1}{\alpha}-2}\frac{1-t}{1+t} dt-2M\int_{0}^{|z|}t^{\frac{1}{\alpha}-2}\frac{t}{1+t} dt\nonumber\\[1mm]
&=&(1-\alpha)\int_{0}^{|z|}\left(t^{\frac{1}{\alpha}-2}-2t^{\frac{1}{\alpha}-1}\sum_{n=0}^{\infty}(-1)^nt^n dt\right)-2M\int_{0}^{|z|}t^{\frac{1}{\alpha}-1}\left(\sum_{n = 0}^\infty (-1)^{n} t^n\right)\\[1mm]
&=&(1-\alpha)\frac{|z|^{\frac{1}{\alpha}-1}}{\frac{1}{\alpha}-1}-2(M-\alpha+1)\sum_{n=0}^{\infty}(-1)^n\frac{|z|^{\frac{1}{\alpha}+n}}{\frac{1}{\alpha}+n}.\eeas
Therefore, we have 
\bea\label{e81}|F_{\epsilon}'(z)|=|h'(z)+\epsilon g'(z)|\geq 1-2(M-\alpha+1)\sum_{n=1}^{\infty}(-1)^{n-1}\frac{|z|^{n}}{1+\alpha (n-1)}.\eea 
Since (\ref{e81}) holds for all $\epsilon$ with $|\epsilon|=1$, thus, we have
\bea\label{e9} |h'(z)|-|g'(z)|\geq 1-2(M-\alpha+1)\sum_{n=1}^{\infty}(-1)^{n-1}\frac{|z|^{n}}{1+\alpha (n-1)}.\eea
Using (\ref{e9}), we have
\beas|f(z)|=\left|\int_0^z\left(\frac{\partial f}{\partial \xi}d\xi+\frac{\partial f}{\partial \ol{\xi}}d\ol{\xi}\right)\right|&=&\left|\int_{0}^{z}\left(h^{\prime}(\xi)d\xi + \overline{g^{\prime}(\xi)}d\overline{\xi}\right)\right|\\[1mm]
&\geq& \int_0^{z}\left(|h'(\xi)|-|g'(\xi)|\right)|d\xi|\\[1mm]
&\geq&\int_0^{|z|}\left(1-2(M-\alpha+1)\sum_{n=1}^{\infty}(-1)^{n-1}\frac{t^{n}}{1+\alpha (n-1)}\right)dt\\[1mm]
&=&|z|+2(M-\alpha+1)\sum_{n=2}^{\infty}(-1)^{n-1}\frac{|z|^{n}}{n(1+\alpha (n-2))}.\eeas
{\bf Case 2.} Let $\alpha=1$. From (\ref{e5}), we have
\beas\frac{d}{dz}\left(F_{\epsilon}'(z)\right)&=&\frac{2M\omega(z)}{z(1-\omega(z))}.\eeas
Using $F_{\epsilon}'(0)=1$, we have
\beas |F_{\epsilon}'(z)|&=&\left|1+2M\int_{0}^{|z|}\frac{\omega(te^{i\theta})}{ te^{i\theta}(1-\omega(te^{i\theta}))} e^{i\theta}dt\right|\\
&\leq& 1+2M\int_{0}^{|z|}\frac{dt}{1-t}=1+2M\int_{0}^{|z|}\sum_{n=0}^\infty t^ndt=1+2M\sum_{n=1}^\infty \frac{|z|^{n}}{n}.
\eeas
Similarly, we obtain 
\beas |F_{\epsilon}'(z)|&\geq& 1+2M\int_{0}^{|z|}\frac{1}{t}\text{Re}\left(\frac{\omega(te^{i\theta})}{1-\omega(te^{i\theta})}\right) dt\\[2mm]
&\geq& 1-2M\int_{0}^{|z|}\frac{dt}{1+t}=1-2M\sum_{n=1}^\infty \frac{(-1)^{n-1}|z|^{n}}{n}.\eeas
Using similar argument as in \textrm{Case 1}, we have 
\beas |z|+2M\sum_{n=2}^{\infty}\frac{(-1)^{n-1}|z|^{n}}{n(n-1)}\leq |f(z)|\leq |z|+2M\sum_{n=2}^{\infty}\frac{|z|^n}{n(n-1)}.\eeas
Equality in (\ref{eq1}) holds when $f$ given by
\beas f(z)=z+2(M-\alpha+1)\sum_{n=2}^{\infty}\frac{z^n}{n(1+\alpha (n-2))}\in\mathcal{P}_{\mathcal{H}}^0(\alpha,M)\eeas
or its suitable rotations.
This completes the proof.
\end{proof}
\begin{cor} \label{CTheo1}
Let $f = h + \overline{g} \in \mathcal{P}_{\mathcal{H}}^{0}(\alpha,M)$ be of the form (1.1) for $M > 0$ and $\alpha \in (0,1]$. For $|z| = r < 1$, we have
\beas
|f(z)| &\leq& r + \frac{2(M-\alpha+1)}{\alpha} \left(r + (1-r)\log(1-r)\right)\\\text{and}\quad
|f(z)| &\geq&r - \frac{2(M-\alpha+1)}{\alpha} \left(r + (1-r)\log(1-r)\right).\eeas
\end{cor}
\begin{proof}
Note that for $n \geq 2$ and $\alpha \in (0,1]$, we have $1 + \alpha(n-2) \geq \alpha(n-1)$ and consequently,
\beas
\frac{1}{n(1+\alpha(n-2))} \leq \frac{1}{\alpha n(n-1)}.\eeas
Therefore, we have
\beas
\sum_{n=2}^{\infty} \frac{r^n}{n(1+\alpha(n-2))} \leq \frac{1}{\alpha} \sum_{n=2}^{\infty} \frac{r^n}{n(n-1)} = \frac{1}{\alpha}\left( r+(1-r) \log (1-r)\right).\eeas
For the lower bound, note that
\beas
\left|\sum_{n=2}^{\infty}\frac{(-1)^{n-1}r^{n}}{n(1+\alpha(n-2))}\right| \leq \sum_{n=2}^{\infty}\frac{r^{n}}{n(1+\alpha(n-2))} \leq \frac{1}{\alpha} \left(r + (1-r)\log(1-r)\right).\eeas
The result follows from \textrm{Theorem} \ref{Theo1}.
\end{proof}
\begin{rem} Setting $\alpha=1$ in \textrm{Theorem \ref{Theo1}} gives the sharp version of \textrm{Theorem C}. \end{rem}
After establishing the growth characteristics of functions in the class $\mathcal{P}_{\mathcal{H}}^0(\alpha,M)$, a subsequent geometric investigation involves determining the largest disk within which these functions are guaranteed to be univalent or convex. This is known as the radius problem.
We now recall the following well-known result.
\begin{lem}\label{lem6}\cite{AZ1990} Let $f=h+\ol{g}$ be given by (\ref{e1}). 
(i) If $\sum_{n=2}^{\infty} n\left(|a_n|+|b_n|\right)\leq 1$, then $f$ is starlike in $\mathbb{D}$; 
(ii) If $\sum_{n=2}^{\infty} n^2\left(|a_n|+|b_n|\right)\leq 1$, then $f$ is convex in $\mathbb{D}$.
\end{lem}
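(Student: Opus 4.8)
The plan is to prove (i) from the standard analytic characterization of harmonic starlike mappings, and then to obtain (ii) from (i) via the harmonic analogue of Alexander's theorem.

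For (i), put $\sigma_1:=\sum_{n\ge2}n(|a_n|+|b_n|)$ and assume $\sigma_1\le1$. First I would check that $f$ is sense-preserving and does not vanish off the origin: for $z\in\mathbb D$,
\[
|h'(z)|-|g'(z)|\ \ge\ 1-\sum_{n=2}^{\infty}n\big(|a_n|+|b_n|\big)|z|^{n-1}\ \ge\ 1-\sigma_1\ \ge\ 0,
\]
which is strict for $0<|z|<1$ unless $f(z)\equiv z$, so $J_f>0$; and since $\sum_{n\ge2}(|a_n|+|b_n|)\le\tfrac12\sigma_1\le\tfrac12$ we get $|f(z)|\ge|z|\big(1-\sum_{n\ge2}(|a_n|+|b_n|)|z|^{n-1}\big)\ge\tfrac12|z|>0$ for $z\neq0$. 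By the Clunie--Sheil-Small criterion for harmonic starlikeness (see \cite{CS1984, D2004}) it then suffices to verify
\[
\frac{\partial}{\partial\theta}\arg f(re^{i\theta})=\text{Re}\,\frac{zh'(z)-\overline{zg'(z)}}{f(z)}\ \ge\ 0,\qquad z=re^{i\theta}\in\mathbb D .
\]
The key step is that, with $w:=\big(zh'(z)-\overline{zg'(z)}\big)/f(z)$, the condition $\text{Re}\,w\ge0$ is equivalent to $|w-1|\le|w+1|$, i.e. to $|zh'(z)-\overline{zg'(z)}-f(z)|\le|zh'(z)-\overline{zg'(z)}+f(z)|$; expanding both sides in Taylor series about $z=0$ and applying the triangle inequality, this reduces to $\sum_{n\ge2}n(|a_n|+|b_n|)|z|^{n-1}\le1$, which holds because $|z|<1$ and $\sigma_1\le1$ (the same estimate shows $zh'(z)-\overline{zg'(z)}+f(z)\ne0$ for $z\ne0$, so the reduction to $|w-1|\le|w+1|$ is legitimate). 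This proves (i).

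For (ii), assume $\sigma_2:=\sum_{n\ge2}n^2(|a_n|+|b_n|)\le1$; since $n\le n^2$ we have $\sigma_1\le\sigma_2\le1$, so $f$ is again sense-preserving. I would invoke the harmonic analogue of Alexander's theorem: $f=h+\overline g$ is convex in $\mathbb D$ if and only if $\varphi(z):=zh'(z)-\overline{zg'(z)}$ is starlike in $\mathbb D$. This follows from the Clunie--Sheil-Small convexity criterion $\partial_\theta\arg\{\partial_\theta f(re^{i\theta})\}\ge0$ together with the identity $\partial_\theta f(re^{i\theta})=i\,\varphi(re^{i\theta})$, which makes that condition coincide with the starlikeness criterion $\partial_\theta\arg\varphi(re^{i\theta})\ge0$ (see \cite{CS1984, D2004}). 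Now $\varphi=H+\overline G$ with $H(z)=z+\sum_{n\ge2}na_nz^n$ and $G(z)=-\sum_{n\ge2}nb_nz^n$, so $\varphi$ has the normalized form (\ref{e1}) and its coefficients satisfy $|A_n|+|B_n|=n(|a_n|+|b_n|)$; hence $\sum_{n\ge2}n(|A_n|+|B_n|)=\sigma_2\le1$, and part (i) applied to $\varphi$ shows $\varphi$ is starlike. Therefore $f$ is convex.

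The step requiring the most care is not the (routine) coefficient estimation but the correct use of the geometric characterizations: one must cite precisely the Clunie--Sheil-Small equivalence between the differential inequality $\partial_\theta\arg f\ge0$ for a sense-preserving harmonic mapping and genuine univalence onto a starlike domain, together with its convexity counterpart and the Alexander-type correspondence $\varphi(z)=zh'(z)-\overline{zg'(z)}$. With these in place, the argument is exactly the power-series bookkeeping in the style of Avci and Z\l otkiewicz \cite{AZ1990}.
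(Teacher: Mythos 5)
The paper does not prove this lemma at all: it is imported verbatim from Avci--Zlotkiewicz \cite{AZ1990}, so there is no in-paper argument to compare yours against. Judged on its own terms, your computational core is correct and is exactly the Avci--Zlotkiewicz-style bookkeeping: writing $zh'(z)-\overline{zg'(z)}\mp f(z)$ out in series, the inequality $|zh'-\overline{zg'}-f|\le|zh'-\overline{zg'}+f|$ does reduce, via the triangle inequality, to $\sum_{n\ge2}\bigl((n-1)+(n+1)\bigr)(|a_n|+|b_n|)|z|^n\le 2|z|$, i.e.\ to $\sum_{n\ge2}n(|a_n|+|b_n|)|z|^{n-1}\le1$, and the reduction of (ii) to (i) through $\varphi(z)=zh'(z)-\overline{zg'(z)}$, whose coefficients satisfy $|A_n|+|B_n|=n(|a_n|+|b_n|)$, is the standard and efficient route.

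Two points, however, are not actually closed. First, univalence of $f$ is never established: the circle-wise inequality $\partial_\theta\arg f(re^{i\theta})\ge0$ together with sense-preservation and $f\ne0$ off the origin yields univalence and starlikeness only after invoking the argument principle for sense-preserving harmonic mappings (winding number of $f(re^{i\theta})$ about $0$ equals the number of zeros in $|z|<r$, hence equals $1$), and there is no such ``if and only if criterion'' in Clunie--Sheil-Small to cite; the cleaner fix, and the one in \cite{AZ1990}, is to prove univalence directly from $|f(z_1)-f(z_2)|\ge|h(z_1)-h(z_2)|-|g(z_1)-g(z_2)|\ge|z_1-z_2|\bigl(1-\sum_{n\ge2}n(|a_n|+|b_n|)\bigr)$ with strictness coming from $|z_1^n-z_2^n|<n|z_1-z_2|$ in $\mathbb{D}$. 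Second, the ``harmonic Alexander theorem'' as you state it ($f$ convex $\iff$ $\varphi$ starlike) is false as an unrestricted equivalence: a univalent harmonic map onto a starlike domain need not satisfy $\partial_\theta\arg\varphi(re^{i\theta})\ge0$ on every circle (this gap between starlike and \emph{fully} starlike is precisely the subject of the Kalaj--Ponnusamy--Vuorinen paper in this article's reference list). Your argument survives because part (i) actually delivers the circle-wise differential inequality for $\varphi$, which via $\partial_\theta f=i\varphi$ is exactly the circle-wise convexity inequality for $f$; but the proof should be phrased in terms of that inequality (full starlikeness/full convexity), not in terms of an iff with the geometric notion of starlikeness.
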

\begin{rem}
For parameters $a, b, c\in\mathbb{C}$ with $c\not=-k$ $(k\in\mathbb{N}\cup\{0\})$ and $z\in\mathbb{D}$, the Gaussian hypergeometric function is defined by
\beas F(a, b, c, z):={}_2F_1(a,b;c;z)=\sum_{n=0}^\infty \gamma_n z^n,\quad\text{where}\quad \gamma_n=\frac{(a)_n(b)_n}{(c)_n(1)_n},\eeas
which converges for all $z\in\mathbb{D}$ and converges on the circle $|z|=1$ if $\text{Re}(c-a-b)>0$. Here, $(a)_n$ is the Pochhammer symbol.
It is evident that the Gaussian hypergeometric function $F(a,b;c;z)$ is analytic within the domain $\mathbb{D}$, and it can also be analytically continued outside the unit circle.
For further details, we refer to \cite{QV2005} and the references cited therein.\end{rem}
\noindent In view of \textrm{Theorem \ref{T3}} and \textrm{Lemma \ref{lem6}}, it is possible to show that each $f\in\mathcal{P}_{\mathcal{H}}^0(\alpha,M)$ is convex (resp. starlike) in some disk $D$.
\begin{theo}\label{Theo2} Let $f=h+\ol{g}\in\mathcal{P}_{\mathcal{H}}^0(\alpha,M)$ be given by (\ref{e1}) for $M>0$ and $\alpha\in(0,1]$. Then, \begin{enumerate}
\item[(i)] $f$ is starlike in $|z|<r^*$, where $r^*\in(0,1)$ is the smallest root of the equation 
\bea\label{a1} G_{1,\alpha,M}(r):=r\;{}_2F_1\left(1,\frac{1}{\alpha};1+\frac{1}{\alpha}; r\right)-\frac{1}{2(M-\alpha+1)}=0;\eea
\item[(ii)] $f$ is convex in $|z|<r_c$, where $r_c\in(0,1)$ is the smallest root of the equation 
\beas G_{2,\alpha,M}(r):=\frac{r}{1-r}+\left(2\alpha-1\right)r\;{}_2F_1\left(1,\frac{1}{\alpha};1+\frac{1}{\alpha}; r\right)-\frac{\alpha}{2(M-\alpha+1)}=0.\eeas
\end{enumerate}\end{theo}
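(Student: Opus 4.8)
We reduce both assertions to the coefficient criteria of Lemma \ref{lem6} via the standard dilation device. For $r\in(0,1)$ put $f_r(z):=f(rz)/r=z+\sum_{n=2}^{\infty}a_nr^{n-1}z^n+\overline{\sum_{n=2}^{\infty}b_nr^{n-1}z^n}$, which is again of the form (\ref{e1}); since the map $w\mapsto w/r$ preserves starlikeness with respect to the origin and convexity, $f$ is starlike (resp. convex) in $|z|<r$ if and only if $f_r$ is starlike (resp. convex) in $\mathbb{D}$. Hence, by Lemma \ref{lem6}, it suffices to locate the largest $r\in(0,1)$ for which
\[
\sum_{n=2}^{\infty}n\bigl(|a_n|+|b_n|\bigr)r^{n-1}\le 1\qquad\text{(for starlikeness)},
\]
respectively $\sum_{n=2}^{\infty}n^2\bigl(|a_n|+|b_n|\bigr)r^{n-1}\le 1$ (for convexity). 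Inserting the sharp estimate $|a_n|+|b_n|\le 2(M-\alpha+1)/\bigl(n(1+\alpha(n-2))\bigr)$ from Theorem \ref{T3}, these become
\[
2(M-\alpha+1)\sum_{n=2}^{\infty}\frac{r^{n-1}}{1+\alpha(n-2)}\le 1,\qquad 2(M-\alpha+1)\sum_{n=2}^{\infty}\frac{n\,r^{n-1}}{1+\alpha(n-2)}\le 1 .
\]

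The next step is to evaluate the two series. Writing $k=n-2$ and using the elementary identity ${}_2F_1(1,b;b+1;z)=b\sum_{k\ge0}z^k/(b+k)$ with $b=1/\alpha$ gives
\[
\sum_{n=2}^{\infty}\frac{r^{n-1}}{1+\alpha(n-2)}=r\,{}_2F_1\!\Bigl(1,\tfrac1\alpha;1+\tfrac1\alpha;r\Bigr).
\]
For the second sum I would use the splitting $n=\tfrac1\alpha\bigl(1+\alpha(n-2)\bigr)+\tfrac{2\alpha-1}{\alpha}$, which separates it into a geometric part and a multiple of the first series:
\[
\sum_{n=2}^{\infty}\frac{n\,r^{n-1}}{1+\alpha(n-2)}=\frac{r}{\alpha(1-r)}+\frac{(2\alpha-1)r}{\alpha}\,{}_2F_1\!\Bigl(1,\tfrac1\alpha;1+\tfrac1\alpha;r\Bigr).
\]
Substituting and multiplying through by $\alpha/\bigl(2(M-\alpha+1)\bigr)$, the starlikeness condition becomes exactly $G_{1,\alpha,M}(r)\le 0$ and the convexity condition becomes exactly $G_{2,\alpha,M}(r)\le 0$.

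It remains to analyse these two scalar equations. Since $1/(1+\alpha(n-2))>0$ and $n/(1+\alpha(n-2))>0$ for $n\ge 2$, both $\sum_{n\ge2}\frac{r^{n-1}}{1+\alpha(n-2)}$ and $\sum_{n\ge2}\frac{nr^{n-1}}{1+\alpha(n-2)}$ are continuous, strictly increasing on $(0,1)$, vanish at $r=0$, and — because $c-a-b=0$ for ${}_2F_1(1,\tfrac1\alpha;1+\tfrac1\alpha;\cdot)$, which therefore diverges as $r\to1^-$, as does $r/(1-r)$ — tend to $+\infty$ as $r\to1^-$. Consequently $G_{1,\alpha,M}$ and $G_{2,\alpha,M}$ are continuous, strictly increasing, equal to $-\alpha/\bigl(2(M-\alpha+1)\bigr)<0$ at $r=0$, and positive for $r$ close to $1$, so each has a unique zero $r^*$, respectively $r_c$, in $(0,1)$; moreover the corresponding coefficient inequality holds precisely for $r\le r^*$, respectively $r\le r_c$. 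Lemma \ref{lem6} then yields starlikeness of $f$ in $|z|\le r^*$ and convexity of $f$ in $|z|\le r_c$. The only delicate points are the bookkeeping in the two series identities — in particular the splitting trick used for the convexity sum — and the (standard, cf. \cite{QV2005}) fact that ${}_2F_1(1,\tfrac1\alpha;1+\tfrac1\alpha;r)$ blows up as $r\to1^-$, which is exactly what forces the two roots to lie inside the unit interval.
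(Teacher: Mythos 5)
Your proposal is correct and follows essentially the same route as the paper: dilation to $f_r$, the Avci--Zlotkiewicz coefficient criteria of Lemma \ref{lem6}, the bounds of Theorem \ref{T3}, identification of the resulting series with ${}_2F_1\bigl(1,\tfrac1\alpha;1+\tfrac1\alpha;r\bigr)$, and an intermediate-value argument for the roots. The only differences are cosmetic: you evaluate the convexity series by the algebraic splitting $n=\tfrac1\alpha\bigl(1+\alpha(n-2)\bigr)+\tfrac{2\alpha-1}{\alpha}$ where the paper differentiates the starlikeness series term by term (both yield the same expression), and your observation that the series have positive coefficients gives strict monotonicity and hence uniqueness of $r^*$ and $r_c$, a small sharpening the paper does not state.
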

\begin{proof} Consider the dilation $f_r(z)=(1/r)f(rz)=z+\sum_{n=2}^{\infty} a_nr^{n-1}z^n+\ol{\sum_{n=2}^{\infty} b_nr^{n-1}z^n}$ for 
$z\in\mathbb{D}$ and $0<r<1$.
For convenience, let us consider the sums:
\beas S_1(r)=\sum_{n=2}^{\infty} n\left(|a_n|+|b_n|\right)r^{n-1}\quad\text{and}\quad S_2(r)=\sum_{n=2}^{\infty} n^2\left(|a_n|+|b_n|\right)r^{n-1}.\eeas
(i) Applying \textrm{Theorem \ref{T3}}, we obtain
\beas S_1\leq 2(M-\alpha+1)\sum_{n=2}^{\infty}\frac{r^{n-1}}{1+\alpha (n-2)}&=&\frac{2(M-\alpha+1)}{\alpha}\sum_{n=2}^{\infty}\frac{r^{n-1}}{\frac{1}{\alpha}+n-2}\\[2mm]
&=&\frac{2(M-\alpha+1)}{\alpha}\left(r^{1-\frac{1}{\alpha}}\sum_{n=2}^{\infty}\int_{t=0}^r t^{\frac{1}{\alpha}+n-3}dt\right)\\[1.5mm]
&=&\frac{2(M-\alpha+1)}{\alpha}\left(r^{1-\frac{1}{\alpha}}\int_{t=0}^r \frac{t^{\frac{1}{\alpha}-1}}{1-t}dt\right)\\
&=&\frac{2(M-\alpha+1)}{\alpha}r\int_{s=0}^1 \frac{s^{\frac{1}{\alpha}-1}}{1-rs}ds.\eeas
Using the Gaussian hypergeometric function, we obtain the following identity:
\bea \label{tt1}\int_{s=0}^1 \frac{s^{\frac{1}{\alpha}-1}}{1-rs}ds=\alpha\;{}_2F_1\left(1,\frac{1}{\alpha};1+\frac{1}{\alpha}; r\right).\eea
Thus, we have
\beas S_1(r)\leq 2(M-\alpha+1)r\;{}_2F_1\left(1,\frac{1}{\alpha};1+\frac{1}{\alpha}; r\right)\leq1\quad\text{for}\quad |z|\leq r^*,\eeas
where $r^*\in(0,1)$ is the smallest root of the equation 
\bea\label{t1} G_{1,\alpha,M}(r):=r\;{}_2F_1\left(1,\frac{1}{\alpha};1+\frac{1}{\alpha}; r\right)-\frac{1}{2(M-\alpha+1)}=0.\eea
Note that ${}_2F_1\left(1,\frac{1}{\alpha};1+\frac{1}{\alpha}; 0\right)=1$, ${}_2F_1\left(1,\frac{1}{\alpha};1+\frac{1}{\alpha}; 1\right)=+\infty$ and the function 
$G_{1,\alpha,M}(r)$ is continuous in $(0,1)$ with $\lim_{r\to 0^+} G_{1,\alpha,M}(r)=-\frac{1}{2(M-\alpha+1)}<0$ and $\lim_{r\to1^-}G_{1,\alpha,M}(r)=+\infty$. By the 
Intermediate Value Theorem, there exists an $r^*\in(0,1)$ such that $G_{1,\alpha,M}(r^*)=0$. Since $S_1(r) \leq 1$ for $r \leq r^*$, by \textrm{Lemma \ref{lem6}} (i), it foll0ws that the dilation $f_r$ is starlike in $\mathbb{D}$ for $r \leq r^*$. 
We know that, if $f_r$ is starlike in $\mathbb{D}$, then the image $f_r(\mathbb{D}) = (1/r)f(r\mathbb{D})$ is a starlike domain. Since scaling preserves starlikeness, $f(r\mathbb{D})$ 
is also starlike. Therefore, $f$ is starlike in the disk $|z|< r$ for $r \leq r^*$, and consequently, $f$ is starlike in $|z| < r^*$, where $r^*\in(0,1)$ is the smallest root of (\ref{t1}).\\[2mm]
\noindent (ii) Combining \textrm{Theorem \ref{T3}} with the identity (\ref{tt1}), we obtain  
\beas S_2(r)&\leq&2(M-\alpha+1)\sum_{n=2}^{\infty}\frac{nr^{n-1}}{1+\alpha (n-2)}\hspace{3cm}\\[1mm]
&=&\frac{2(M-\alpha+1)}{\alpha}\left(\sum_{n=2}^{\infty}\frac{r^{n}}{\frac{1}{\alpha}+n-2}\right)'\\[1mm]
&=&\frac{2(M-\alpha+1)}{\alpha}\left(r^{2-\frac{1}{\alpha}}\sum_{n=2}^{\infty}\int_{t=0}^r t^{\frac{1}{\alpha}+n-3}dt\right)'\\[1mm]
&=&\frac{2(M-\alpha+1)}{\alpha}\left(r^{2-\frac{1}{\alpha}}\int_{t=0}^r \frac{t^{\frac{1}{\alpha}-1}}{1-t}dt\right)'\\[1mm]
&=&\frac{2(M-\alpha+1)}{\alpha}\left(\frac{r}{1-r}+\left(2-\frac{1}{\alpha}\right)r^{1-\frac{1}{\alpha}}\int_{t=0}^r \frac{t^{\frac{1}{\alpha}-1}}{1-t}dt\right)\\[1mm]
&=&\frac{2(M-\alpha+1)}{\alpha}\left(\frac{r}{1-r}+\left(2-\frac{1}{\alpha}\right)r\int_{s=0}^1 \frac{s^{\frac{1}{\alpha}-1}}{1-rs}ds\right)\\[1mm]
&=&\frac{2(M-\alpha+1)}{\alpha}\left(\frac{r}{1-r}+\left(2\alpha-1\right)r\;{}_2F_1\left(1,\frac{1}{\alpha};1+\frac{1}{\alpha}; r\right)\right)\\[1mm]\
&\leq& 1\quad\text{for}\quad |z|\leq r_c,\eeas
where $r_c\in(0,1)$ is the smallest root of the equation 
\bea\label{t2} G_{2,\alpha,M}(r):=\frac{r}{1-r}+\left(2\alpha-1\right)r\;{}_2F_1\left(1,\frac{1}{\alpha};1+\frac{1}{\alpha}; r\right)-\frac{\alpha}{2(M-\alpha+1)}=0.\eea
Note that the function $G_{2,\alpha,M}(r)$ is continuous in $(0,1)$ with $\lim_{r\to 0^+} G_{2,\alpha,M}(r)=-\frac{\alpha}{2(M-\alpha+1)}<0$ and $\lim_{r\to 1^-} G_{2,\alpha,M}(r)=+\infty$. In view of the Intermediate value theorem, there exists a $r_c\in(0,1)$ such that $G_{2,\alpha,M}(r_c)=0$. Since $S_2(r) \leq 1$ for $r \leq r_c$, by 
\textrm{Lemma \ref{lem6}} (ii), it follows that the dilation $f_r$ is convex in $\mathbb{D}$ for $r \leq r^*$. 
If $f_r$ is convex in $\mathbb{D}$, then the image $f_r(\mathbb{D}) = (1/r)f(r\mathbb{D})$ is a convex domain. Since scaling preserves convexity, $f(r\mathbb{D})$ 
is also convex. Therefore, $f$ is convex in the disk $|z| < r$ for $r \leq r_c$, and consequently, $f$ is convex in $|z| < r_c$, where $r_c\in(0,1)$ is the smallest root of (\ref{t2}).
This completes the proof.\end{proof}
\begin{rem} In view of \textrm{Theorem \ref{Theo2}}, each function belonging to the class $\mathcal{P}_{\mathcal{H}}^0(\alpha,M)$ is univalent within the disk $|z|< r^*$, where $r^*\in(0, 1)$ denotes the smallest root of the equation (\ref{a1}).
\end{rem}
In Table \ref{tab1}, we obtain the values of $r^*$ and $r_c$ for specific values of $M>0$ and $\alpha\in(0,1]$.
\begin{table}[H]
\centering
\caption{$r^*$ (resp. $r_c$) is the smallest root of the equation (\ref{t1}) (resp. (\ref{t2})) in $(0,1)$}
\label{tab1}
\begin{tabular}{*{8}{|c}|}
\hline
$\alpha$&1		&1/2			&1/2			&1/3			&3/4			&3/4			&1/4\\
\hline
$M$&1/2				&1/4			&2				&1/9			&1/100		&1/10		&1/5\\
\hline
$r^*$&0.632121	&0.454395	&0.176134	&0.42966	&0.823912 &0.732081 	&0.368607\\
\hline
$r_c$&0.357799	&0.25&0.0909091	&0.237029	&0.515173 &0.436194 	&0.200939\\
\hline
\end{tabular}
\end{table}
\begin{rem} Table \ref{tab1} clearly illustrates that for a fixed $\alpha$, there is an inverse relationship between the parameter $M$ and the radius of univalency $r^*$. As $M$ increases, it allows functions to exhibit greater growth, which subsequently reduces the assured radius of univalency for the entire class. \end{rem}
\begin{rem}It is easy to observe that $r_c < r^{*}$ for all parameter combinations, which aligns perfectly with the geometric hierarchy that convexity is a stronger condition than starlikeness. The computed radii provide boundaries within which all functions in $\mathcal{P}_{\mathcal{H}}^{0}(\alpha,M)$ are guaranteed to possess these geometric properties.\end{rem}
In Figures \ref{Fig1} and \ref{Fig2}, we illustrate the locations of $r^*$ and $r_c$ for certain values of $M>0$ and $\alpha\in(0,1]$.
\begin{figure}[H]
\centering
\includegraphics[scale=0.9]{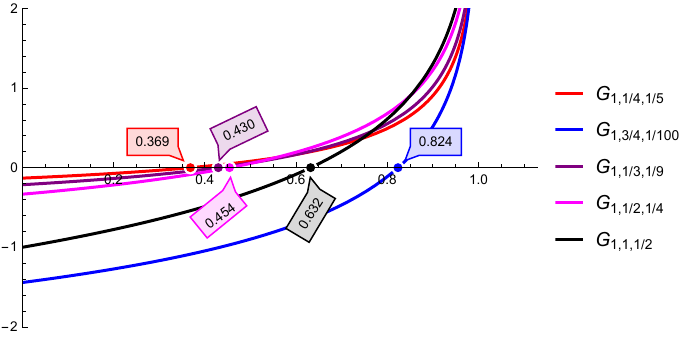}
\caption{Graph of $G_{1,\alpha,M}(r)$ for different values of $M>0$ and $\alpha\in(0,1]$}
\label{Fig1}
\end{figure}
\begin{figure}[H]
\centering
\includegraphics[scale=0.9]{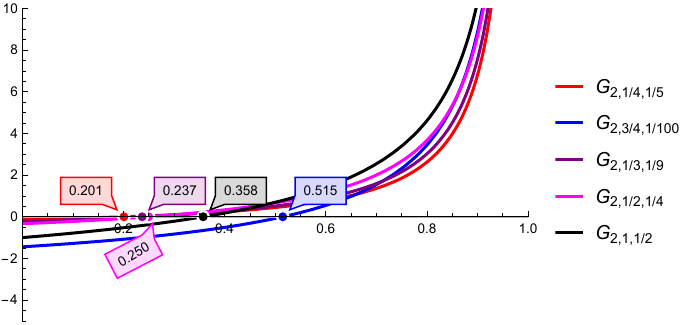}
\caption{Graph of $G_{2,\alpha,M}(r)$ for different values of $M>0$ and $\alpha\in(0,1]$}
\label{Fig2}
\end{figure}
\begin{rem} Figures 1 and 2 shows the behaviour of the functions $G_{1,\alpha,M}(r)$ and $G_{2,\alpha,M}(r)$, whose roots are the radii $r^*$ and $r_c$, respectively. The graphs confirm the existence and uniqueness of these roots within the interval (0, 1) and demonstrate their dependence on the parameters. 
\end{rem}
\noindent In the following result, we establish a sufficient condition for harmonic functions to belong to $\mathcal{P}_{\mathcal{H}}^0(\alpha,M)$.
\begin{theo}\label{TT1} Let $f=h+\ol{g}\in\mathcal{H}$ with $g'(0)=0$ be given by (\ref{e1}). If 
\beas \sum_{n=2}^\infty (n+\alpha n(n-2))\left(|a_n|+|b_n|\right)< M-\alpha+1,\eeas
then $f\in\mathcal{P}_{\mathcal{H}}^0(\alpha,M)$.\end{theo}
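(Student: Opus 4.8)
The plan is to show directly that the hypothesis forces the defining inequality of $\mathcal{P}_{\mathcal{H}}^0(\alpha,M)$ to hold at every $z\in\mathbb{D}$. First I would write out the two relevant derivative combinations in series form: since $h(z)=z+\sum_{n\ge2}a_nz^n$ and $g(z)=\sum_{n\ge2}b_nz^n$ (using $g'(0)=0$), one has
\[
(1-\alpha)h'(z)+\alpha z h''(z)=1+\sum_{n=2}^\infty\bigl(n+\alpha n(n-2)\bigr)a_nz^{n-1},
\]
and similarly $(1-\alpha)g'(z)+\alpha z g''(z)=\sum_{n=2}^\infty\bigl(n+\alpha n(n-2)\bigr)b_nz^{n-1}$. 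Note each coefficient weight $n+\alpha n(n-2)=n(1+\alpha(n-2))$ is positive for $n\ge2$ and $\alpha\in(0,1]$, which is what makes the argument work.

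Next I would estimate. For the real part, $\mathrm{Re}\bigl((1-\alpha)h'(z)+\alpha z h''(z)\bigr)\ge 1-\sum_{n=2}^\infty(n+\alpha n(n-2))|a_n|\,|z|^{n-1}> 1-\sum_{n=2}^\infty(n+\alpha n(n-2))|a_n|$, using $|z|<1$. For the modulus term, $\bigl|(1-\alpha)g'(z)+\alpha z g''(z)\bigr|\le\sum_{n=2}^\infty(n+\alpha n(n-2))|b_n|\,|z|^{n-1}<\sum_{n=2}^\infty(n+\alpha n(n-2))|b_n|$. Subtracting, it suffices to check that
\[
1-\sum_{n=2}^\infty(n+\alpha n(n-2))|a_n|-\sum_{n=2}^\infty(n+\alpha n(n-2))|b_n|\ge -M,
\]
i.e. $\sum_{n=2}^\infty(n+\alpha n(n-2))(|a_n|+|b_n|)\le M+1$. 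The hypothesis gives the strictly stronger bound $\sum_{n=2}^\infty(n+\alpha n(n-2))(|a_n|+|b_n|)< M-\alpha+1\le M+1$ (since $\alpha>0$), so the required inequality holds, and in fact strictly, which also shows $h'\ne0$ and hence $f$ is sense-preserving; combined with $g'(0)=0$ this places $f$ in $\mathcal{P}_{\mathcal{H}}^0(\alpha,M)$.

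There is no serious obstacle here: the only point requiring a line of care is justifying that the series for $(1-\alpha)h'+\alpha z h''$ and $(1-\alpha)g'+\alpha z g''$ converge and can be bounded termwise on $\mathbb{D}$ — this follows immediately because the hypothesis forces $\sum (n+\alpha n(n-2))(|a_n|+|b_n|)$ to converge, so both series converge absolutely and uniformly on compact subsets of $\mathbb{D}$, and the termwise modulus bounds with $|z|^{n-1}<1$ are then legitimate. One might also remark that the constant $M-\alpha+1$ in the hypothesis (rather than the weaker $M+1$ that the computation literally needs) is chosen so that it matches the sharp coefficient functionals appearing in Theorems \ref{T2} and \ref{T3}, making the sufficient condition consistent with the necessary-type bounds obtained there.
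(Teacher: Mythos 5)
Your overall strategy is the same as the paper's: expand both derivative combinations in power series, bound the real part from below and the modulus from above termwise, and compare against the hypothesis. There is, however, an arithmetic slip in your first display that propagates into a false conclusion about the role of the constant $M-\alpha+1$. Since $h'(z)=1+\sum_{n\ge2}na_nz^{n-1}$, the constant term of $(1-\alpha)h'(z)+\alpha z h''(z)$ is $(1-\alpha)\cdot 1=1-\alpha$, not $1$; the correct identity is
\[
(1-\alpha)h'(z)+\alpha z h''(z)=(1-\alpha)+\sum_{n=2}^{\infty}\bigl(n+\alpha n(n-2)\bigr)a_nz^{n-1}.
\]
Consequently the lower bound on the real part is $(1-\alpha)-\sum_{n\ge2}\bigl(n+\alpha n(n-2)\bigr)|a_n|$, and after subtracting the modulus bound the inequality you actually need is
\[
\sum_{n=2}^{\infty}\bigl(n+\alpha n(n-2)\bigr)\bigl(|a_n|+|b_n|\bigr)\le (1-\alpha)+M=M-\alpha+1,
\]
which is exactly the hypothesis, not something ``strictly stronger'' than what the computation requires. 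The theorem still follows once the constant is corrected --- your chain of estimates closes with no further changes --- but your concluding remark that the computation ``literally needs'' only $M+1$ and that $M-\alpha+1$ is chosen merely to match the coefficient bounds of Theorems \ref{T2} and \ref{T3} is wrong: the constant $M-\alpha+1$ is forced by the computation itself, as in the paper's proof.

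A secondary point: your parenthetical claim that the inequality ``also shows $h'\ne 0$'' is neither needed nor justified. Membership in $\mathcal{P}_{\mathcal{H}}^0(\alpha,M)$, as defined, requires only the stated real-part inequality together with $g'(0)=0$; and for $\alpha<1$ the condition $\mathrm{Re}\bigl((1-\alpha)h'(z)+\alpha zh''(z)\bigr)>-M$ does not by itself prevent $h'$ from vanishing, so that side remark should simply be dropped.
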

\begin{proof}Since $h(z)=z+\sum_{n=2}^\infty a_nz^n$ and $g(z)=\sum_{n=2}^\infty b_nz^n$, thus, it follows that
\beas \textrm{Re}\left((1-\alpha)h'(z)+\alpha zh''(z)\right)&=&\text{Re}\left((1-\alpha)+\sum_{n=2}^\infty (n+\alpha n(n-2))a_nz^{n-1}\right)\\[1mm]
&\geq&(1-\alpha)-\left|\sum_{n=2}^\infty (n+\alpha n(n-2))a_nz^{n-1}\right|\\[1mm]
&\geq&(1-\alpha)-\sum_{n=2}^\infty (n+\alpha n(n-2))|a_n|.\eeas
Given that $\sum_{n=2}^\infty (n+\alpha n(n-2))\left(|a_n|+|b_n|\right)< M-\alpha+1$, thus, we have
\beas \text{Re}\left((1-\alpha)h'(z)+\alpha zh''(z)\right)&>&  -M+\sum_{n=2}^\infty  (n+\alpha n(n-2))|b_n|\\[1mm]
&\geq& -M+\left|\sum_{n=2}^\infty  (n+\alpha n(n-2))b_n z^{n-1}\right|\\[1mm]
&=&-M+\left|(1-\alpha)g'(z)+\alpha zg''(z)\right|,\eeas
which shows that $f\in\mathcal{P}_{\mathcal{H}}^0(\alpha,M)$. This completes the proof.
\end{proof}
\section{Structural Properties: Convex Combinations and Convolutions}
\noindent In the following result, we establish that the class $\mathcal{P}_{\mathcal{H}}^0(\alpha,M)$ is invariant under convex combinations.
\begin{theo}\label{T5} The class $\mathcal{P}_{\mathcal{H}}^0(\alpha,M)$ is closed under convex combinations.\end{theo}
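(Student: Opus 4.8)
The plan is to take a convex combination $f=\sum_{i=1}^{k}t_i f_i$, where each $f_i=h_i+\ol{g_i}\in\mathcal{P}_{\mathcal{H}}^0(\alpha,M)$, $t_i\ge 0$ and $\sum_{i=1}^{k}t_i=1$, and to check directly that $f$ meets the two defining requirements of the class. Writing $h=\sum_{i=1}^{k}t_i h_i$ and $g=\sum_{i=1}^{k}t_i g_i$, we have $f=h+\ol{g}$, and the normalizations are immediate: $h(0)=\sum t_i h_i(0)=0$, $h'(0)=\sum t_i h_i'(0)=\sum t_i=1$, $g(0)=\sum t_i g_i(0)=0$, and $g'(0)=\sum t_i g_i'(0)=0$ since each $g_i'(0)=0$. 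Hence $f\in\mathcal{H}$ with $g'(0)=0$.

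Next I would exploit the linearity of the two differential operators occurring in the definition. Setting $L_\alpha[\phi](z):=(1-\alpha)\phi'(z)+\alpha z\phi''(z)$, one has $L_\alpha[h]=\sum_{i=1}^{k}t_i L_\alpha[h_i]$ and $L_\alpha[g]=\sum_{i=1}^{k}t_i L_\alpha[g_i]$. Because each $f_i\in\mathcal{P}_{\mathcal{H}}^0(\alpha,M)$, for every $z\in\mathbb{D}$ and every $i$ we have $\mathrm{Re}\,L_\alpha[h_i](z)>-M+|L_\alpha[g_i](z)|$. Multiplying by $t_i$ and summing over $i$ (using $\sum t_i=1$, with at least one $t_i$ positive, so that strictness survives) gives $\mathrm{Re}\,L_\alpha[h](z)=\sum t_i\,\mathrm{Re}\,L_\alpha[h_i](z)>-M+\sum t_i|L_\alpha[g_i](z)|$.

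Finally, the triangle inequality yields $\sum_{i=1}^{k}t_i|L_\alpha[g_i](z)|\ge\bigl|\sum_{i=1}^{k}t_i L_\alpha[g_i](z)\bigr|=|L_\alpha[g](z)|$, and combining this with the previous bound gives $\mathrm{Re}\,L_\alpha[h](z)>-M+|L_\alpha[g](z)|$ for all $z\in\mathbb{D}$, i.e. $f\in\mathcal{P}_{\mathcal{H}}^0(\alpha,M)$. There is no substantive obstacle here; the only points demanding care are preserving strict inequality through the convex combination (which works precisely because the $t_i$ form a probability vector that is not identically zero) and applying the triangle inequality in the correct direction on the co-analytic side. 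If one wishes to permit countably many $f_i$, one would additionally observe that $\sum t_i f_i$ converges locally uniformly in $\mathbb{D}$ by the growth estimate of \textrm{Theorem \ref{Theo1}}, which legitimizes interchanging $\mathrm{Re}$, the operator $L_\alpha$, and the summation.
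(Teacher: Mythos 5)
Your proposal is correct and follows essentially the same route as the paper: exploit the linearity of the operator $(1-\alpha)\phi'+\alpha z\phi''$, multiply each defining inequality by $t_i$, sum using $\sum t_i=1$, and finish with the triangle inequality on the co-analytic side. The extra care you take with the normalizations and the survival of strict inequality is sound but not a departure from the paper's argument.
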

\begin{proof} Let $f_i=h_i+\ol{g_i}\in\mathcal{P}_{\mathcal{H}}^0(\alpha,M)$ for $1\leq i\leq n$ and $\sum_{i=1}^n t_i=1$, where $0\leq t_i\leq 1$ for each $i$. Thus, we have 
\beas \text{Re}\left((1-\alpha)h_i'(z)+ \alpha zh_i''(z)\right)>-M+\left|(1-\alpha)h_i'(z)+\alpha zg_i''(z)\right|\eeas
with $h_i(0)=g_i(0)=h_i'(0)-1=g_i'(0)=0$ for $1\leq i\leq n$ and $z\in\mathbb{D}$.
The convex combination of the $f_i$'s can be written as $f(z)=\sum_{i=1}^n t_if_i(z)=h(z)+\ol{g(z)}$,
where $h(z)=\sum_{i=1}^n t_i h_i(z)$ and $g(z)=\sum_{i=1}^n t_ig_i(z)$. It is evident that both $h$ and $g$ are analytic in $\mathbb{D}$ with $h(0)=g(0)=h'(0)-1=g'(0)=0$.
Therefore,
\beas \text{Re}\left((1-\alpha)h'(z)+ \alpha zh''(z)\right)&=&\text{Re}\left((1-\alpha)\sum_{i=1}^n t_i h_i'(z)+\alpha z\sum_{i=1}^nt_ih_i''\right)\hspace{2cm}\\[2mm]
&=&\text{Re}\left(\sum_{i=1}^n t_i\left((1-\alpha)h_i'(z)+ \alpha zh_i''(z)\right)\right)\\[1mm]
&>&\sum_{i=1}^nt_i \left(-M+\left|(1-\alpha)g_i'(z)+\alpha zg_i''(z)\right|\right)\eeas
\beas\hspace{4cm}&=&-M+\sum_{i=1}^nt_i\left|(1-\alpha)g_i'(z)+\alpha zg_i''(z)\right| \\[1mm]
&\geq&-M+\left|(1-\alpha)\left(\sum_{i=1}^nt_ig_i'(z)\right)+\alpha z\left(\sum_{i=1}^nt_i g_i''(z)\right)\right|\\[1mm]
&=& -M+\left|(1-\alpha)g'(z)+\alpha zg''(z)\right|,\eeas
which shows that $f\in\mathcal{P}_{\mathcal{H}}^0(\alpha,M)$. This completes the proof.\end{proof}
\noindent Before we proceed to the convolution theorem for the harmonic class, we first establish an essential structural property of the related analytic class $\mathcal{P}(\alpha, M)$, namely its convexity.
\begin{lem}\label{lem1}
The class \(\mathcal{P}(\alpha, M)\) is closed under convex combinations.
\end{lem}
\begin{proof}
Let $F_1, F_2, \ldots, F_n \in \mathcal{P}(\alpha, M)$. Consider the convex combination $F=\sum_{i=1}^n t_i F_i$, where $t_i\geq 0$ for each $i$ with $\sum_{i=1}^n t_i=1$.
Thus, for each $i$, we have
\beas \text{Re}\left((1-\alpha)F_i'(z) + \alpha z F_i''(z)\right) > -M \quad \text{for }\quad z \in \mathbb{D}.\eeas
Therefore.
\beas
\text{Re}\left((1-\alpha)F'(z) + \alpha z F''(z)\right)
&= & \text{Re}\left(\sum_{i=1}^n t_i \left((1-\alpha)F_i'(z) + \alpha z F_i''(z)\right)\right) \\[1mm]
&=& \sum_{i=1}^n t_i  \text{Re}\left((1-\alpha)F_i'(z) + \alpha z F_i''(z)\right) \\[1mm]
&>& \sum_{i=1}^n t_i (-M) = -M,
\eeas
which shows that $F \in \mathcal{P}(\alpha, M)$. This completes the proof.
\end{proof}
\noindent We need the following essential lemmas to prove our convolution results.
\begin{lem}\label{lem2}\cite{SS1989} Let $\{a_n\}$ be a convex null sequence. Then the function $p$ given by $p(z)=a_0/2+\sum_{n=1}^{\infty} a_nz^n$ is analytic in 
$\mathbb{D}$ and $\text{Re}(p(z))>0$ in $\mathbb{D}$. \end{lem}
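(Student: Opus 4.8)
The plan is to exhibit $p$ as an infinite series with nonnegative coefficients of the Fej\'er--Ces\`aro kernels
\[
q_m(z)=\frac12+\sum_{k=1}^{m}\left(1-\frac{k}{m+1}\right)z^k\qquad(m\geq 0),
\]
each of which has positive real part in $\mathbb{D}$, and then to read off positivity of $\textrm{Re}\,p$ term by term.

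First I would establish that $\textrm{Re}\,q_m(z)>0$ for $z\in\mathbb{D}$ and every $m\geq 0$. On the unit circle, writing $z=e^{i\theta}$, the Fej\'er identity
\[
\left|\sum_{j=0}^{m}e^{ij\theta}\right|^{2}=(m+1)+2\sum_{k=1}^{m}(m+1-k)\cos k\theta
\]
shows that $\textrm{Re}\left(\frac{m+1}{2}+\sum_{k=1}^{m}(m+1-k)z^k\right)\geq 0$ on $|z|=1$; dividing by $(m+1)/2$ gives $\textrm{Re}\,q_m(z)\geq 0$ there. Since $q_m$ is a nonconstant polynomial (for $m\geq1$) with $q_m(0)=1/2$, the minimum principle applied to the harmonic function $\textrm{Re}\,q_m$ upgrades this to $\textrm{Re}\,q_m(z)>0$ throughout $\mathbb{D}$, the case $m=0$ being trivial since $q_0\equiv1/2$.

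Next I would expand $p$ in the $q_m$. Set $\Delta a_m=a_m-a_{m+1}$, $\Delta^{2}a_m=\Delta a_m-\Delta a_{m+1}$, and $c_m=(m+1)\,\Delta^{2}a_m$; convexity of $\{a_n\}$ gives $c_m\geq 0$. Two successive applications of Abel summation, together with the elementary fact that a nonincreasing nonnegative summable sequence $\{\Delta a_n\}$ satisfies $n\,\Delta a_n\to 0$, yield $\sum_{m\geq0}c_m=a_0$ and $\sum_{m\geq k}c_m\,\frac{m+1-k}{m+1}=a_k$ for every $k\geq1$. Matching coefficients of $z^k$, these identities say precisely that $p(z)=\sum_{m=0}^{\infty}c_m\,q_m(z)$ for $z\in\mathbb{D}$.

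It remains to check convergence and strictness. On any disk $|z|\leq\rho<1$ one has $|q_m(z)|\leq\frac12+\rho/(1-\rho)$ uniformly in $m$, and $\sum_m c_m=a_0<\infty$, so the series converges absolutely and locally uniformly; hence $p$ is analytic in $\mathbb{D}$ and $\textrm{Re}\,p(z)=\sum_m c_m\,\textrm{Re}\,q_m(z)\geq 0$. If $\{a_n\}$ is not identically zero, then some $c_m>0$ (otherwise all $\Delta^{2}a_m=0$, forcing $\Delta a_m$ constant and hence, by summability, $\Delta a_m\equiv0$, so $a_n\equiv0$), and then $\textrm{Re}\,p(z)\geq c_m\,\textrm{Re}\,q_m(z)>0$. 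I expect the only delicate point to be the double summation-by-parts bookkeeping, in particular the justification that $n\,\Delta a_n\to0$ for a convex null sequence; everything else is routine, and the positivity of the kernels $q_m$ is the single external ingredient (the Fej\'er kernel being a nonnegative square).
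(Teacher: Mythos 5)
The paper offers no proof of this lemma: it is quoted verbatim from Singh and Singh \cite{SS1989} (the result itself goes back to Fej\'er), so there is no in-paper argument to compare against. Your proof is correct and is essentially the classical one: decompose $p$ as a nonnegative combination $\sum_m c_m q_m$ of Fej\'er kernels with $c_m=(m+1)\Delta^2 a_m\ge 0$, use the Fej\'er square identity plus the minimum principle to get $\mathrm{Re}\,q_m>0$ in $\mathbb{D}$, and verify the coefficient identities by two Abel summations. I checked the bookkeeping: $\sum_{m\ge 0}c_m=a_0$ and $\sum_{m\ge k}c_m\frac{m+1-k}{m+1}=a_k$ do follow from telescoping once one knows $n\,\Delta a_n\to 0$, and that fact is genuine for a nonincreasing nonnegative summable sequence (e.g.\ $\tfrac{n}{2}\Delta a_n\le\sum_{j>n/2}^{n}\Delta a_j\to 0$), so the step you flag as delicate does go through. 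Two small remarks. First, in the kernel positivity step the normalizing factor should be $m+1$, not $(m+1)/2$ (dividing by $(m+1)/2$ yields $2q_m$); this is immaterial. Second, you correctly isolate the degenerate case $a_n\equiv 0$, for which $p\equiv 0$ and the strict inequality $\mathrm{Re}\,p>0$ actually fails; the lemma as stated in the paper silently assumes nontriviality, and your treatment is if anything more careful than the source. In the paper's application (Lemma 3.4) the sequence has $c_0=1>0$, so this caveat is harmless there.
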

\begin{lem}\label{lem3}\cite{SS1989} Let the function $p$ be analytic in $\mathbb{D}$ with $p(0)=1$ and $\text{Re}(p(z))>1/2$ in $\mathbb{D}$. Then for any analytic function $f$ 
in $\mathbb{D}$, the function $p*f$ takes values in the convex hull of the image of $\mathbb{D}$ under $f$. \end{lem}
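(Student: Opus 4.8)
Since this is the classical lemma of \cite{SS1989}, the plan is to give its standard proof, which reduces the statement to the Herglotz--Carath\'eodory representation of functions with positive real part and then recognizes $p*f$ as a barycenter of values of $f$. First, as $p$ is analytic with $p(0)=1$ and $\text{Re}(p(z))>1/2$ in $\mathbb{D}$, the function $q:=2p-1$ is analytic with $q(0)=1$ and $\text{Re}(q(z))>0$ in $\mathbb{D}$, so by the Herglotz representation theorem there is a Borel probability measure $\mu$ on the unit circle $\partial\mathbb{D}$ with
\[
q(z)=\int_{\partial\mathbb{D}}\frac{1+xz}{1-xz}\,d\mu(x),\qquad z\in\mathbb{D}.
\]
Since $\mu$ is a probability measure and $\frac12\bigl(1+\frac{1+xz}{1-xz}\bigr)=\frac{1}{1-xz}$, this gives
\[
p(z)=\int_{\partial\mathbb{D}}\frac{1}{1-xz}\,d\mu(x),\qquad z\in\mathbb{D},
\]
and in particular the Taylor coefficients of $p$ are $p_n=\int_{\partial\mathbb{D}}x^n\,d\mu(x)$ for $n\ge 0$.

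Next I would compute the Hadamard product termwise. Writing $f(z)=\sum_{n=0}^{\infty}c_nz^n$, for each fixed $x\in\partial\mathbb{D}$ the Cauchy kernel $k_x(z):=(1-xz)^{-1}=\sum_{n=0}^{\infty}x^nz^n$ satisfies $(k_x*f)(z)=\sum_{n=0}^{\infty}x^nc_nz^n=f(xz)$, so that
\[
(p*f)(z)=\sum_{n=0}^{\infty}p_nc_nz^n=\sum_{n=0}^{\infty}\left(\int_{\partial\mathbb{D}}x^n\,d\mu(x)\right)c_nz^n=\int_{\partial\mathbb{D}}f(xz)\,d\mu(x),\qquad z\in\mathbb{D}.
\]
The only technical point is the interchange of the sum and the integral in the last equality. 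Fixing $r$ with $|z|\le r<1$ and choosing $\rho\in(r,1)$, the Cauchy estimates give $|c_n|\le C\rho^{-n}$ for some constant $C=C(\rho,f)$, hence $\sum_{n\ge 0}|c_n|r^n<\infty$; the partial sums of $\sum_n x^nc_nz^n$ are then dominated uniformly in $x\in\partial\mathbb{D}$ by this convergent series, so the Weierstrass $M$-test (or dominated convergence with respect to $\mu$) justifies termwise integration.

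Finally, I would fix $z\in\mathbb{D}$ and note that for every $x\in\partial\mathbb{D}$ one has $|xz|=|z|<1$, so $f(xz)\in f(\mathbb{D})$, and the push-forward $\nu$ of $\mu$ under the continuous map $x\mapsto f(xz)$ is a Borel probability measure whose compact support lies in $f(\mathbb{D})$. Thus $(p*f)(z)=\int_{\mathbb{C}}w\,d\nu(w)$ is the barycenter of $\nu$, and a separating-line argument places it in the closed convex hull of $f(\mathbb{D})$: otherwise some closed half-plane $H$ would contain $f(\mathbb{D})$ but not $(p*f)(z)$, which is impossible because $\int_{\partial\mathbb{D}}f(xz)\,d\mu(x)\in H$ whenever every $f(xz)\in H$. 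When $f$ is non-constant, the open mapping theorem makes $f(\mathbb{D})$ open and its convex hull open, and the same supporting-line argument sharpens the conclusion to $(p*f)(z)\in\text{co}\,f(\mathbb{D})$; the constant case is trivial. The main obstacle is therefore merely the passage from $\text{Re}(p)>1/2$ to the integral representation together with the sum/integral interchange — the convexity conclusion itself is soft.
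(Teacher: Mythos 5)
Your proof is correct: the reduction $q=2p-1$, the Herglotz representation giving $p(z)=\int_{\partial\mathbb{D}}(1-xz)^{-1}\,d\mu(x)$, the identity $(p*f)(z)=\int_{\partial\mathbb{D}}f(xz)\,d\mu(x)$, and the barycenter/supporting-line argument constitute the standard proof of this classical lemma, and your justification of the sum--integral interchange and of the sharpening from the closed convex hull to the (open) convex hull for non-constant $f$ are both sound. Note that the paper itself offers no proof of this statement --- it is quoted directly from Singh and Singh \cite{SS1989} --- so there is nothing internal to compare against; your argument is exactly the route one would expect from that source.
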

\begin{lem}\label{lem4} Let $F\in\mathcal{P}(\alpha, M)$ with $M-\alpha+1\leq 3(1+\alpha)/(6\alpha+4)$. Then, $\text{Re}\left(F(z)/z\right)>1/2$ for $z\in\mathbb{D}$.\end{lem}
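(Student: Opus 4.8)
The plan is to realize $F(z)/z$ as a Hadamard product of the function of positive real part naturally attached to $F$ with an explicit power series, and then to push its real part past $1/2$ by applying Lemmas~\ref{lem2} and~\ref{lem3} in succession. Since $F\in\mathcal{P}(\alpha,M)$ and $M-\alpha+1>0$, the function
\[
p(z)=\frac{(1-\alpha)F'(z)+\alpha zF''(z)+M}{M-\alpha+1}
\]
is analytic in $\mathbb{D}$ with $p(0)=1$ and $\text{Re}\,p(z)>0$. Writing $F(z)=z+\sum_{n\geq2}A_nz^n$ and $p(z)=1+\sum_{n\geq1}p_nz^n$ and comparing coefficients yields $A_n=\frac{M-\alpha+1}{c_n}p_{n-1}$ for $n\geq2$, where $c_n:=n+\alpha n(n-2)$. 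Consequently, with $\Psi(z):=1+2(M-\alpha+1)\sum_{k\geq1}z^k/c_{k+1}$, one gets $2F(z)/z-1=(\Psi*p)(z)$. As $\text{Re}(F(z)/z)>1/2$ is equivalent to $\text{Re}\big(2F(z)/z-1\big)>0$, it suffices to prove $\text{Re}(\Psi*p)(z)>0$ in $\mathbb{D}$.

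The crux is to show $\text{Re}\,\Psi(z)>1/2$ in $\mathbb{D}$, equivalently $\text{Re}\big(2\Psi(z)-1\big)>0$. Now $2\Psi(z)-1=\frac{a_0}{2}+\sum_{n\geq1}a_nz^n$ with $a_0=2$ and $a_n=4(M-\alpha+1)/c_{n+1}$ for $n\geq1$, so by Lemma~\ref{lem2} it is enough that $\{a_n\}$ be a convex null sequence. Granting this, $\Psi$ is analytic in $\mathbb{D}$ with $\Psi(0)=1$ and $\text{Re}\,\Psi>1/2$, so Lemma~\ref{lem3} (with $\Psi$ in the role of $p$ and with $f=p$) shows that $\Psi*p$ takes values in the convex hull of $p(\mathbb{D})$; since $\text{Re}\,p>0$ and the right half-plane is convex, this forces $\text{Re}(\Psi*p)>0$, completing the argument.

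It remains to check that $\{a_n\}$ is a convex null sequence. Clearly $a_n\geq0$ and $a_n\to0$. For $n\geq2$ the inequality $a_{n-1}-a_n\geq a_n-a_{n+1}$ amounts to $1/c_n-2/c_{n+1}+1/c_{n+2}\geq0$, i.e.\ to the convexity of $m\mapsto 1/c_m$ for $m\geq2$; writing $c_m=g(m)$ with $g(x)=\alpha x^2+(1-2\alpha)x>0$ on $[2,\infty)$, this follows from $(1/g)''=\big(2(g')^2-gg''\big)/g^3\geq0$ there, since $2(g')^2-gg''=6\alpha^2x^2+6\alpha(1-2\alpha)x+2(1-2\alpha)^2$ has discriminant $-12\alpha^2(1-2\alpha)^2\leq0$. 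The only remaining inequality is the ``link'' $a_0-a_1\geq a_1-a_2$; since $c_2=2$ and $c_3=3(1+\alpha)$, this reads $2-2(M-\alpha+1)\geq 2(M-\alpha+1)(3\alpha+1)/\big(3(1+\alpha)\big)$, which rearranges exactly to $M-\alpha+1\leq 3(1+\alpha)/(6\alpha+4)$ — precisely the hypothesis. This bound also forces $M-\alpha+1\leq 1$, hence $a_0-a_1\geq0$, and then all first differences are non-negative by the convexity just established.

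The main (indeed, the only) obstacle is this verification that $\{a_n\}$ is a convex null sequence, and within it the single ``link'' inequality $a_0-a_1\geq a_1-a_2$, which is sharp and is exactly what pins down the threshold $3(1+\alpha)/(6\alpha+4)$; the interior convexity inequalities hold uniformly in $\alpha$ and $M$ via the one discriminant computation, and the manipulations with Hadamard products are routine.
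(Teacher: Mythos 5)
Your proof is correct and follows essentially the same route as the paper: both realize $F(z)/z$ (up to the affine renormalization $w\mapsto 2w-1$) as a Hadamard product of the positive--real-part function attached to $F$ with the explicit series $1+2(M-\alpha+1)\sum_{n\ge 2}z^{n-1}/(n+\alpha n(n-2))$, then invoke Lemma \ref{lem2} on the associated convex null sequence and Lemma \ref{lem3} to conclude, with the hypothesis $M-\alpha+1\le 3(1+\alpha)/(6\alpha+4)$ entering exactly through the link inequality $a_0-a_1\ge a_1-a_2$. The only substantive difference is that you verify the interior convexity $1/c_n-2/c_{n+1}+1/c_{n+2}\ge 0$ rigorously for all $n\ge 2$ via the discriminant of $2(g')^2-gg''$, whereas the paper only checks the first few differences and asserts the pattern continues; your version closes that small gap.
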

\begin{proof} Let $F\in\mathcal{P}(\alpha, M)$ be given by $F(z)=z+\sum_{n=2}^\infty A_nz^n$. Then, we have 
\beas &&\qquad\quad\text{Re}\left((1-\alpha)F'(z)+ \alpha zF''(z)\right)>-M,\\ 
&& \text{\it i.e.,}\quad\text{Re}\left(1+\frac{1}{2(M-\alpha+1)}\sum_{n=2}^\infty (n+\alpha n(n-2))A_nz^{n-1}\right)>\frac{1}{2}
\quad\text{for}\quad z\in\mathbb{D}.\eeas
Let $p(z)=1+\frac{1}{2(M-\alpha+1)}\sum_{n=2}^\infty (n+\alpha n(n-2))A_nz^{n-1}$. Then, $p(0)=1$ and $\text{Re}(p(z))>1/2$ in $\mathbb{D}$.
We consider a sequence $\{c_n\}$ defined by $c_0=1$ and $c_{n-1}=\frac{2(M-\alpha+1)}{n+\alpha n(n-2)}$ for $n\geq 2$. It is evident that $c_n\to 0$ as $n\to\infty$. Note that $c_0-c_1=1-(M-\alpha+1)$, $c_1-c_2=(M-\alpha+1)(1+3\alpha)/(3+3\alpha)$, $c_2-c_3=(5 \alpha +1)(M-\alpha +1)/6( (\alpha +1) (2 \alpha +1))$, $c_3-c_4=(7 \alpha +1)(M-\alpha +1)/(10 (2 \alpha +1) (3 \alpha +1))$, etc. 
It is evident that 
\beas &&(c_1-c_2)-(c_2-c_3)=\frac{\left(12 \alpha ^2+5 \alpha +1\right)(M-\alpha +1)}{6(\alpha +1) (2 \alpha +1)}\geq 0,\\
&&(c_2-c_3)-(c_3-c_4)=\frac{\left(27 \alpha ^2+8 \alpha +1\right)(M-\alpha +1)}{15(\alpha +1) (2 \alpha +1) (3 \alpha +1)}\geq 0\eeas
and so on. Thus, we have $c_{i-2}-c_{i-1}\geq c_{i-1}-c_{i}\geq 0$ for $i\geq 3$.
Therefore, 
\beas c_0-c_1\geq c_1-c_2\geq\cdots\geq c_{n-1}-c_n\geq\cdots\geq 0\eeas 
is possible only when $M-\alpha+1\leq 3(1+\alpha)/(6\alpha+4)$.
Thus, $\{c_n\}$ is a convex null sequence. In view of \textrm{Lemma \ref{lem2}}, we have the function 
\beas q(z)=\frac{1}{2}+\sum_{n=2}^\infty \frac{2(M-\alpha+1)}{n+\alpha n(n-2)}z^{n-1},\eeas
which is analytic in $\mathbb{D}$ and $\text{Re} (q(z))>0$ in $\mathbb{D}$. Thus, we have
\bea\label{ew3}\frac{F(z)}{z}=1+\sum_{n=2}^\infty A_nz^{n-1}=p(z)*\left(1+\sum_{n=2}^\infty \frac{2(M-\alpha+1)}{n+\alpha n(n-2)}z^{n-1}\right)
.\eea
In view of \textrm{Lemma \ref{lem3}} and using (\ref{ew3}), we have $\text{Re}\left(F(z)/z\right)>1/2$ for $z\in\mathbb{D}$. This completes the proof.
\end{proof}
\begin{lem}\label{lem5} Let $F_1,F_2\in\mathcal{P}(\alpha, M)$ with $M-\alpha+1\leq 3(1+\alpha)/(6\alpha+4)$. Then, $F_1*F_2\in\mathcal{P}(\alpha, M)$.\end{lem}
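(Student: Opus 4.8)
The plan is to reduce everything to the Hadamard–product machinery of Lemmas \ref{lem3} and \ref{lem4}, now applied to the single auxiliary operator $\phi\mapsto(1-\alpha)\phi'+\alpha z\phi''$ rather than to the harmonic pieces. Write $F_1(z)=z+\sum_{n=2}^{\infty}A_nz^n$ and $F_2(z)=z+\sum_{n=2}^{\infty}B_nz^n$. Then $(F_1*F_2)(z)=z+\sum_{n=2}^{\infty}A_nB_nz^n$, so $F_1*F_2\in\mathcal{A}$ automatically and it only remains to verify the real-part condition defining $\mathcal{P}(\alpha,M)$. I would set
\[
\Phi(z):=(1-\alpha)(F_1*F_2)'(z)+\alpha z(F_1*F_2)''(z)+M,
\]
so that the goal becomes $\text{Re}\,\Phi(z)>0$ for $z\in\mathbb{D}$.

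The key computational step is the identity $\Phi=p*\Psi$, where $p(z):=F_2(z)/z=1+\sum_{n=2}^{\infty}B_nz^{n-1}$ and $\Psi(z):=(1-\alpha)F_1'(z)+\alpha zF_1''(z)+M=(M-\alpha+1)+\sum_{n=2}^{\infty}(n+\alpha n(n-2))A_nz^{n-1}$. One verifies this by comparing the $z^{n-1}$-coefficients: the two constant terms multiply to $1\cdot(M-\alpha+1)=M-\alpha+1$, and for $n\ge 2$ the product coefficient is $B_n(n+\alpha n(n-2))A_n$, which is exactly the $z^{n-1}$-coefficient of $\Phi$, because for $F(z)=z^n$ one has $(1-\alpha)F'(z)+\alpha zF''(z)=\big((1-\alpha)n+\alpha n(n-1)\big)z^{n-1}=(n+\alpha n(n-2))z^{n-1}$. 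The only thing to keep straight here is the constant-term bookkeeping, namely that $p(0)=1$, so that convolving $p$ against the constant $M$ simply returns $M$.

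Now I invoke the two lemmas. Since $F_2\in\mathcal{P}(\alpha,M)$ and $M-\alpha+1\le 3(1+\alpha)/(6\alpha+4)$, Lemma \ref{lem4} gives $\text{Re}\,p(z)=\text{Re}\big(F_2(z)/z\big)>1/2$ on $\mathbb{D}$, and $p$ is analytic with $p(0)=1$. On the other hand, $F_1\in\mathcal{P}(\alpha,M)$ means precisely that $\text{Re}\,\Psi(z)>0$ on $\mathbb{D}$, so $\Psi(\mathbb{D})$ lies in the open right half-plane, which is convex. Applying Lemma \ref{lem3} with this $p$ and with $f=\Psi$, the function $\Phi=p*\Psi$ takes all of its values in the convex hull of $\Psi(\mathbb{D})$, hence in the open right half-plane (a convex combination of finitely many points with positive real part again has positive real part, so the strict inequality is preserved). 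Thus $\text{Re}\,\Phi(z)>0$ on $\mathbb{D}$, and unwinding the definition of $\Phi$ this reads $\text{Re}\big((1-\alpha)(F_1*F_2)'(z)+\alpha z(F_1*F_2)''(z)\big)>-M$ on $\mathbb{D}$, i.e. $F_1*F_2\in\mathcal{P}(\alpha,M)$.

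I do not expect a serious obstacle. The only delicate point is pinning down the convolution identity $\Phi=p*\Psi$ exactly—both the constant-term bookkeeping and the arithmetic $(1-\alpha)n+\alpha n(n-1)=n+\alpha n(n-2)$—and the elementary observation that the convex hull of a set contained in an open half-plane is again contained in that half-plane, which is what keeps the conclusion of Lemma \ref{lem3} a strict inequality. Note also that the hypothesis $M-\alpha+1\le 3(1+\alpha)/(6\alpha+4)$ is used only to apply Lemma \ref{lem4}, and only to one of the two factors; by the symmetry of the Hadamard product it does not matter whether we extract $F_2(z)/z$ or $F_1(z)/z$ as the multiplier $p$.
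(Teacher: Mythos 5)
Your proposal is correct and follows essentially the same route as the paper: both establish the convolution identity $(1-\alpha)(F_1*F_2)'+\alpha z(F_1*F_2)''=\bigl(F_2(z)/z\bigr)*\bigl((1-\alpha)F_1'+\alpha zF_1''\bigr)$ and then combine Lemma \ref{lem4} (applied to $F_2$) with Lemma \ref{lem3}. Your only deviation is the cosmetic normalization of adding $M$ to shift everything into the right half-plane before invoking Lemma \ref{lem3}, which changes nothing of substance.
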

\begin{proof} Let $F_1(z)=z+\sum_{n=2}^\infty A_nz^n$ and $F_2(z)=z+\sum_{n=2}^\infty B_nz^n$. The convolution of $F_1$ and $F_2$ is given by $F(z)=F_1(z)*F_2(z)=z+\sum_{n=2}^\infty A_nB_nz^n$.
Therefore,
\bea\label{ew4} (1-\alpha)F'(z)+ \alpha zF''(z)&=&(1-\alpha)+\sum_{n=2}^\infty (n+\alpha n(n-2))A_nB_nz^{n-1}\nonumber\\
&=&\left(\frac{F_2(z)}{z}\right)*\left((1-\alpha)F_1'(z)+ \alpha zF_1''(z)\right).\eea
Since $F_1, F_2\in\mathcal{P}(\alpha, M)$, it follows that $\text{Re}\left((1-\alpha)F_1'(z)+ \alpha zF_1''(z)\right)>-M$ and in view of \textrm{Lemma \ref{lem4}}, we have $\text{Re}\left(F_2(z)/z\right)>1/2$ for $z\in\mathbb{D}$.
In view of \textrm{Lemma \ref{lem3}} and using (\ref{ew4}), we have $\text{Re}\left((1-\alpha)F(z)+ \alpha zF''(z)\right)>-M$ for $z\in\mathbb{D}$. Hence, $F=F_1*F_2\in\mathcal{P}(\alpha, M)$. This completes the proof.
 \end{proof}
  In the following result, we establish that the class $\mathcal{P}_{\mathcal{H}}^0(\alpha,M)$ is invariant under convolutions for some certain condition.
\begin{theo}\label{CTh2} Let $F_1,F_2\in\mathcal{P}_{\mathcal{H}}^0(\alpha,M)$ with $M-\alpha+1\leq 3(1+\alpha)/(6\alpha+4)$. Then, $F_1*F_2\in\mathcal{P}_{\mathcal{H}}^0(\alpha,M)$.\end{theo}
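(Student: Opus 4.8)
The plan is to reduce the harmonic statement to the analytic convolution result Lemma~\ref{lem5} via the characterization in Theorem~\ref{T1}. Write $F_i=h_i+\ol{g_i}$ for $i=1,2$, so that by definition of the harmonic convolution $F_1*F_2=h_1*h_2+\ol{g_1*g_2}$. By Theorem~\ref{T1} it suffices to show that, for every unimodular constant $\epsilon$, the analytic function $(h_1*h_2)+\epsilon\,(g_1*g_2)$ lies in $\mathcal{P}(\alpha,M)$; note that $h_1*h_2=z+\sum_{n\ge 2}a_n^{(1)}a_n^{(2)}z^n$ carries the correct normalization and that $(g_1*g_2)'(0)=0$ since each $g_i$ starts at the quadratic term.

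The key algebraic step is to realize the ``mixed'' convolution $(h_1*h_2)+\epsilon\,(g_1*g_2)$ as an average of two ordinary convolutions of functions already known to lie in $\mathcal{P}(\alpha,M)$. Fix $\epsilon$ with $|\epsilon|=1$ and choose $\delta$ with $\delta^2=\epsilon$ and $|\delta|=1$. Expanding the Hadamard products termwise, the cross terms cancel and one gets
\[
(h_1+\delta g_1)*(h_2+\delta g_2)+(h_1-\delta g_1)*(h_2-\delta g_2)=2\,h_1*h_2+2\epsilon\,g_1*g_2,
\]
hence
\[
(h_1*h_2)+\epsilon\,(g_1*g_2)=\tfrac12\big[(h_1+\delta g_1)*(h_2+\delta g_2)\big]+\tfrac12\big[(h_1-\delta g_1)*(h_2-\delta g_2)\big].
\]
Since $F_i\in\mathcal{P}_{\mathcal{H}}^0(\alpha,M)$, Theorem~\ref{T1} gives $h_i\pm\delta g_i\in\mathcal{P}(\alpha,M)$; the hypothesis $M-\alpha+1\le 3(1+\alpha)/(6\alpha+4)$ then permits us to apply Lemma~\ref{lem5} to each of the two convolutions on the right-hand side, so both belong to $\mathcal{P}(\alpha,M)$.

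Finally, $\mathcal{P}(\alpha,M)$ is closed under convex combinations: both the defining inequality $\mathrm{Re}\big((1-\alpha)\phi'(z)+\alpha z\phi''(z)\big)>-M$ and the normalization $\phi(0)=\phi'(0)-1=0$ are preserved under averaging. Consequently $(h_1*h_2)+\epsilon\,(g_1*g_2)\in\mathcal{P}(\alpha,M)$, and since $\epsilon$ was an arbitrary unimodular constant, Theorem~\ref{T1} yields $F_1*F_2\in\mathcal{P}_{\mathcal{H}}^0(\alpha,M)$. I do not expect a genuine obstacle; the only points requiring care are the choice of the square root $\delta$ of $\epsilon$ (so that the cross terms cancel while $h_i\pm\delta g_i$ stay in $\mathcal{P}(\alpha,M)$) and noting that the bound on $M-\alpha+1$ is precisely what is needed to invoke Lemma~\ref{lem5} for both convolutions simultaneously.
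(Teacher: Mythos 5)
Your proposal is correct and follows essentially the same route as the paper: reduce to the analytic class via Theorem~\ref{T1}, write $h_1*h_2+\epsilon(g_1*g_2)$ as the average of two convolutions of members of $\mathcal{P}(\alpha,M)$, apply Lemma~\ref{lem5} to each, and conclude by convexity. The only (cosmetic) difference is the splitting: you use the symmetric choice $(h_1\pm\delta g_1)*(h_2\pm\delta g_2)$ with $\delta^2=\epsilon$, whereas the paper uses the asymmetric $(h_1\pm g_1)*(h_2\pm\epsilon g_2)$; both cancel the cross terms and both factors lie in $\mathcal{P}(\alpha,M)$ by Theorem~\ref{T1}.
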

\begin{proof} Let $F_1=h_1+\ol{g_1}$ and $F_2=h_2+\ol{g_2}$ be two functions in $\mathcal{P}_{\mathcal{H}}^0(\alpha,M)$. The convolution of $F_1$ and $F_2$ is given by
$F_1*F_2=h_1*h_2+\ol{g_1*g_2}$. In order to show that $F_1*F_2\in\mathcal{P}_{\mathcal{H}}^0(\alpha,M)$, it suffices to prove that the function
$F_\epsilon=h_1*h_2+\epsilon\left( g_1*g_2\right)\in\mathcal{P}(\alpha, M)$ for each $\epsilon$ with $|\epsilon|=1$. 
By \textrm{Theorem \ref{T1}}, we have $h_1+\epsilon g_1, h_2+\epsilon g_2\in\mathcal{P}(\alpha, M)$ for each $\epsilon$ with $|\epsilon|=1$. Therefore, we have
\beas F_\epsilon=h_1*h_2+\epsilon\left( g_1*g_2\right)=\frac{1}{2}\left((h_1-g_1)*(h_2-\epsilon g_2)\right)+\frac{1}{2}\left((h_1+g_1)*(h_2+\epsilon g_2)\right).\eeas
Using \textrm{Lemma \ref{lem5}}, we have $(h_1-g_1)*(h_2-\epsilon g_2), (h_1+g_1)*(h_2+\epsilon g_2)\in\mathcal{P}(\alpha, M)$ for $M-\alpha+1\leq 3(1+\alpha)/(6\alpha+4)$. 
Since $\mathcal{P}(\alpha, M)$ is a convex set by \textrm{Lemma} \ref{lem1}, the convex combination $F_\epsilon \in\mathcal{P}(\alpha, M)$ for each $\epsilon$ with $|\epsilon|=1$. By \textrm{Theorem} \ref{T1}, we conclude that 
$F_1 * F_2 \in \mathcal{P}^{0}_{\mathcal{H}}(\alpha, M)$. This completes the proof.
 \end{proof}
 \begin{rem}
The condition $M-\alpha+1\leq 3(1+\alpha)/(6\alpha+4)$ in Lemma \ref{lem4} ensures that the sequence $\{c_n\}$ is a convex null sequence. This is essential for applying the classical result of Singh and Singh \cite{SS1989}. This condition defines a specific subfamily of $\mathcal{P}_{\mathcal{H}}^0(\alpha,M)$, where the convolution properties hold. Investigating the convolution for the entire class remains an open problem.
\end{rem}
In 2002, Goodloe \cite{G2002} defined the Hadamard product of a harmonic function with an analytic function as: $f\tilde{*}\phi=h*\phi+\ol{g*\phi}$,
where $f=h+\ol{g}$ is harmonic mapping in $\mathbb{D}$ and $\phi$ is an analytic function in $\mathbb{D}$.
\begin{theo}\label{Th1}
Let $f\in\mathcal{P}_{\mathcal{H}}^0(\alpha,M)$ be given by (\ref{e1}) and $\phi\in\mathcal{A}$ be such that $\text{Re}\left(\phi(z)/z\right)$ $>1/2$ for $z\in\mathbb{D}$. Then, 
$f\tilde{*}\phi\in\mathcal{P}_{\mathcal{H}}^0(\alpha,M)$.\end{theo}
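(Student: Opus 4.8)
The plan is to reduce the harmonic assertion to an analytic one via Theorem \ref{T1}, exactly as in the proof of the convolution theorem for $\mathcal{P}_{\mathcal{H}}^0(\alpha,M)$. Writing $f=h+\ol{g}$, the definition of $\tilde{*}$ gives $f\tilde{*}\phi=h*\phi+\ol{g*\phi}$, and since $h*\phi\in\mathcal{A}$ and $(g*\phi)'(0)=0$ (the linear coefficient of $g$ vanishes), it suffices by Theorem \ref{T1} to show that $F_\epsilon:=(h*\phi)+\epsilon(g*\phi)\in\mathcal{P}(\alpha,M)$ for every $\epsilon$ with $|\epsilon|=1$. The key algebraic observation is that the Hadamard product is bilinear, so $F_\epsilon=(h+\epsilon g)*\phi$, and Theorem \ref{T1} applied to $f$ already tells us $h+\epsilon g\in\mathcal{P}(\alpha,M)$. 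Thus the whole statement follows from the auxiliary claim: if $G\in\mathcal{P}(\alpha,M)$ and $\text{Re}(\phi(z)/z)>1/2$ in $\mathbb{D}$, then $G*\phi\in\mathcal{P}(\alpha,M)$.

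To prove this claim I would argue exactly as in Lemma \ref{lem5}. Expanding $G(z)=z+\sum_{n\geq2}A_nz^n$ and $\phi(z)=z+\sum_{n\geq2}c_nz^n$, a direct coefficient comparison yields the identity
\[
(1-\alpha)(G*\phi)'(z)+\alpha z(G*\phi)''(z)=\left(\frac{\phi(z)}{z}\right)*\Big((1-\alpha)G'(z)+\alpha zG''(z)\Big),
\]
since both sides are analytic with constant term $1-\alpha$ and $z^{n-1}$-coefficient $(n+\alpha n(n-2))A_nc_n$. Now $p(z):=\phi(z)/z$ is analytic with $p(0)=1$ and $\text{Re}(p(z))>1/2$, while the analytic function $q(z):=(1-\alpha)G'(z)+\alpha zG''(z)$ maps $\mathbb{D}$ into the half-plane $\{w:\text{Re}(w)>-M\}$ because $G\in\mathcal{P}(\alpha,M)$. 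A half-plane is convex, so its convex hull is itself; hence Lemma \ref{lem3} applied to $p$ and $q$ shows that $p*q$ also takes values in $\{w:\text{Re}(w)>-M\}$. Reading off the identity above, this says precisely $\text{Re}\big((1-\alpha)(G*\phi)'(z)+\alpha z(G*\phi)''(z)\big)>-M$ in $\mathbb{D}$, i.e. $G*\phi\in\mathcal{P}(\alpha,M)$, the normalization $(G*\phi)(0)=(G*\phi)'(0)-1=0$ being immediate.

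Combining the two steps: $F_\epsilon=(h+\epsilon g)*\phi\in\mathcal{P}(\alpha,M)$ for all $|\epsilon|=1$, and Theorem \ref{T1} then gives $f\tilde{*}\phi\in\mathcal{P}_{\mathcal{H}}^0(\alpha,M)$. I do not anticipate a genuine obstacle here; the only points needing a little care are checking that $\phi(z)/z$ has the right normalization and real-part bound so that Lemma \ref{lem3} truly applies, and noting that the target set is a half-plane and therefore convex so that "convex hull" adds nothing. The argument is essentially the analytic convolution lemma (Lemma \ref{lem5}) with the factor $F_2(z)/z$ replaced by the given $\phi(z)/z$; in particular no size restriction of the type $M-\alpha+1\leq 3(1+\alpha)/(6\alpha+4)$ is needed, because here the half-plane property of $\phi(z)/z$ is a hypothesis rather than something to be extracted from a convex null sequence.
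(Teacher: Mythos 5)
Your proposal is correct and follows essentially the same route as the paper: reduce via Theorem \ref{T1} to showing $(h+\epsilon g)*\phi\in\mathcal{P}(\alpha,M)$, use the convolution identity $(1-\alpha)F_\epsilon'+\alpha zF_\epsilon''=(\phi(z)/z)*\bigl((1-\alpha)f_\epsilon'+\alpha zf_\epsilon''\bigr)$, and apply Lemma \ref{lem3}. Your explicit remarks that the target half-plane is convex and that no size restriction on $M-\alpha+1$ is needed are correct and consistent with the paper's argument.
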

\begin{proof} Let $f=h+\ol{g}\in\mathcal{P}_{\mathcal{H}}^0(\alpha,M)$. In view of \textrm{Theorem \ref{T1}}, we have $f_\epsilon=h+\epsilon g\in\mathcal{P}(\alpha, M)$ for each 
$\epsilon$ with $|\epsilon|=1$. To show that $f\tilde{*}\phi=h*\phi+\ol{g*\phi}\in\mathcal{P}_{\mathcal{H}}^0(\alpha,M)$, it suffices to prove that the function
$F_\epsilon(z)=h*\phi+\epsilon(g*\phi)\in\mathcal{P}(\alpha, M)$ for each $\epsilon$ $(|\epsilon|=1)$. Since $f_\epsilon\in\mathcal{P}(\alpha, M)$ and $\phi\in\mathcal{A}$, let $f_\epsilon(z)=z+\sum_{n=2}^\infty A_nz^n$ and $\phi(z)=z+\sum_{n=2}^\infty B_nz^n$.
Therefore, $F_\epsilon=f_\epsilon*\phi=z+\sum_{n=2}^\infty A_nB_nz^n$.
Thus, we have
\bea \label{ew5}(1-\alpha)F_\epsilon'(z)+ \alpha zF_\epsilon''(z)&=&(1-\alpha)+\sum_{n=2}^\infty (n+\alpha n(n-2))A_nB_nz^{n-1}\nonumber\\
&=&\left(\frac{\phi(z)}{z}\right)*\left((1-\alpha)f_\epsilon'(z)+ \alpha zf_\epsilon''(z)\right).\eea
As $\text{Re}\left(\phi(z)/z\right)>1/2$ for $z\in\mathbb{D}$ and $f_\epsilon\in\mathcal{P}(\alpha, M)$, {\it i.e.,} $\text{Re}\left((1-\alpha)f_\epsilon'(z)+ \alpha zf_\epsilon''(z)\right)>-M$, in view of \textrm{Lemma \ref{lem3}} and using (\ref{ew5}), we have 
\beas \text{Re}\left((1-\alpha)F_\epsilon'(z)+ \alpha zF_\epsilon''(z)\right)>-M\quad \text{for}\quad  z\in\mathbb{D},\eeas
which shows that $F_\epsilon\in\mathcal{P}(\alpha, M)$. This completes the proof.
\end{proof}
\begin{cor} Let $f\in\mathcal{P}_{\mathcal{H}}^0(\alpha,M)$ be given by (\ref{e1}) and $\phi\in\mathcal{K}$, where $\mathcal{K}$ denotes the family of all convex functions in $\mathbb{D}$. Then, 
$f\tilde{*}\phi\in\mathcal{P}_{\mathcal{H}}^0(\alpha,M)$.\end{cor}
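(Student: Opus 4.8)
The plan is to obtain this corollary as an immediate consequence of Theorem \ref{Th1}. That theorem already shows $f\,\tilde{*}\,\phi\in\mathcal{P}_{\mathcal{H}}^0(\alpha,M)$ whenever $\phi\in\mathcal{A}$ satisfies $\text{Re}(\phi(z)/z)>1/2$ on $\mathbb{D}$, so the entire task reduces to checking that a normalized convex function meets this single hypothesis; once that is done, there is nothing left to prove.

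To verify $\text{Re}(\phi(z)/z)>1/2$ for $\phi\in\mathcal{K}$, I would appeal to the classical Marx--Strohh\"acker theory: every normalized convex function is starlike of order $1/2$, and consequently $\phi(z)/z$ is subordinate to $1/(1-z)$, which yields $\text{Re}(\phi(z)/z)>1/2$ throughout $\mathbb{D}$. If a self-contained argument is preferred, one sets $p(z)=\phi(z)/z$, notes $p(0)=1$, and combines the convexity condition $\text{Re}\bigl(1+z\phi''(z)/\phi'(z)\bigr)>0$ with the Herglotz representation of functions with positive real part to recover the same bound after a routine computation. Either way, this is a standard fact and requires no new estimates.

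Having established $\text{Re}(\phi(z)/z)>1/2$, I would simply invoke Theorem \ref{Th1} with this $\phi$ to conclude $f\,\tilde{*}\,\phi\in\mathcal{P}_{\mathcal{H}}^0(\alpha,M)$, which is exactly the assertion. I do not expect any genuine obstacle here: the corollary is a pure specialization of Theorem \ref{Th1}, and the only (elementary) point worth isolating is the implication ``$\phi$ convex $\Rightarrow \text{Re}(\phi(z)/z)>1/2$'', with everything else inherited verbatim from the already proven theorem.
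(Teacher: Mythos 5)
Your proposal is correct and follows exactly the paper's own route: the paper likewise observes that $\phi\in\mathcal{K}$ implies $\mathrm{Re}\left(\phi(z)/z\right)>1/2$ and then invokes Theorem \ref{Th1}. Your additional justification of that classical implication via Marx--Strohh\"acker is fine but not something the paper spells out.
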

\begin{proof} Since $\phi\in\mathcal{K}$, it follows that $\text{Re}\left(\phi(z)/z\right)>1/2$ for $z\in\mathbb{D}$. The result immediately follows from \textrm{Theorem \ref{Th1}}.\end{proof}
\begin{theo}\label{Th2}
Let $f\in\mathcal{P}_{\mathcal{H}}^0(\alpha,M)$ be given by (\ref{e1}) and $\phi\in\mathcal{A}$ be such that $\text{Re}\left(\phi(z)/z\right)$ $>1/2$ for $z\in\mathbb{D}$. Then, 
$f*\left(\phi+\beta\ol{\phi}\right)\in\mathcal{P}_{\mathcal{H}}^0(\alpha,M)$, where $|\beta|=1$.\end{theo}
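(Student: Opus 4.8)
The plan is to reduce this statement to \textrm{Theorem \ref{Th1}} by unwinding the harmonic Hadamard product and carefully tracking the unimodular constant $\beta$. First I would put $\phi+\beta\ol{\phi}$ in canonical form $h_2+\ol{g_2}$ with $h_2=\phi$ and $g_2=\ol{\beta}\,\phi$; both are analytic in $\mathbb{D}$ (here $\ol{\beta}$ is just a constant, using $|\beta|=1$), and $h_2(0)=g_2(0)=h_2'(0)-1=0$, so $\phi+\beta\ol{\phi}\in\mathcal{H}$. By the definition of the harmonic convolution, $f*(\phi+\beta\ol{\phi})=(h*\phi)+\ol{g*(\ol{\beta}\,\phi)}=(h*\phi)+\ol{\ol{\beta}\,(g*\phi)}$. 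Writing $H=h*\phi$ and $G=\ol{\beta}\,(g*\phi)$, we have $f*(\phi+\beta\ol{\phi})=H+\ol{G}$, where $H\in\mathcal{A}$ (so $H(0)=H'(0)-1=0$) and $G$ is analytic with $G(0)=0$ and $G'(0)=\ol{\beta}\,(g*\phi)'(0)=0$, since $g$ (and hence $g*\phi$) begins at $z^2$ because $g'(0)=0$.

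By \textrm{Theorem \ref{T1}}, it then suffices to show that $H+\epsilon G\in\mathcal{P}(\alpha,M)$ for every $\epsilon$ with $|\epsilon|=1$. Now $H+\epsilon G=(h*\phi)+(\epsilon\ol{\beta})(g*\phi)$, and $\delta:=\epsilon\ol{\beta}$ again satisfies $|\delta|=1$. Applying \textrm{Theorem \ref{Th1}} to $f$ and $\phi$ gives $f\tilde{*}\phi=(h*\phi)+\ol{g*\phi}\in\mathcal{P}_{\mathcal{H}}^0(\alpha,M)$, and hence, by \textrm{Theorem \ref{T1}} once more, $(h*\phi)+\delta(g*\phi)\in\mathcal{P}(\alpha,M)$ for all $|\delta|=1$. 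Since $\epsilon\ol{\beta}$ ranges over the whole unit circle as $\epsilon$ does, this yields $H+\epsilon G\in\mathcal{P}(\alpha,M)$ for every $|\epsilon|=1$, and a final application of \textrm{Theorem \ref{T1}} gives $f*(\phi+\beta\ol{\phi})\in\mathcal{P}_{\mathcal{H}}^0(\alpha,M)$.

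Alternatively I could make the argument self-contained, mirroring the proof of \textrm{Theorem \ref{Th1}}: since $f\in\mathcal{P}_{\mathcal{H}}^0(\alpha,M)$, \textrm{Theorem \ref{T1}} gives $f_\delta=h+\delta g\in\mathcal{P}(\alpha,M)$ for all $|\delta|=1$; one then verifies the identity $(1-\alpha)(f_\delta*\phi)'(z)+\alpha z(f_\delta*\phi)''(z)=\big(\phi(z)/z\big)*\big((1-\alpha)f_\delta'(z)+\alpha zf_\delta''(z)\big)$ and invokes \textrm{Lemma \ref{lem3}} with the hypothesis $\mathrm{Re}\big(\phi(z)/z\big)>1/2$ to conclude $\mathrm{Re}\big((1-\alpha)(f_\delta*\phi)'+\alpha z(f_\delta*\phi)''\big)>-M$, i.e. $f_\delta*\phi=(h*\phi)+\delta(g*\phi)\in\mathcal{P}(\alpha,M)$; taking $\delta=\epsilon\ol{\beta}$ finishes it. Either route is routine once the setup is fixed; I do not anticipate a real obstacle. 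The only point requiring care is the bookkeeping of the conjugate $\ol{\beta}$ sitting inside the co-analytic part and the observation that $\epsilon\mapsto\epsilon\ol{\beta}$ is a bijection of the unit circle, which is exactly what allows \textrm{Theorem \ref{T1}} to be applied at both the first and last steps.
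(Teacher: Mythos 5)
Your proposal is correct, and your primary route is genuinely slicker than the paper's. The paper proves \textrm{Theorem \ref{Th2}} from scratch: it writes $f*\left(\phi+\beta\ol{\phi}\right)=h*\phi+\ol{\ol{\beta}(g*\phi)}$, reduces via \textrm{Theorem \ref{T1}} to showing $h*\phi+\epsilon\ol{\beta}(g*\phi)\in\mathcal{P}(\alpha,M)$, and then re-runs the same convolution identity and \textrm{Lemma \ref{lem3}} argument already used in \textrm{Theorem \ref{Th1}}, applied to $h+\epsilon\ol{\beta}g\in\mathcal{P}(\alpha,M)$. Your first route instead observes that \textrm{Theorem \ref{Th1}} together with the equivalence in \textrm{Theorem \ref{T1}} already yields $(h*\phi)+\delta(g*\phi)\in\mathcal{P}(\alpha,M)$ for \emph{all} unimodular $\delta$, and that the desired conclusion is precisely this statement reparametrized by the bijection $\epsilon\mapsto\epsilon\ol{\beta}$ of the unit circle; so \textrm{Theorem \ref{Th2}} is an immediate corollary of \textrm{Theorem \ref{Th1}}, with no new analytic input. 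That is a cleaner exposition and makes the logical dependence explicit; what the paper's (and your alternative) self-contained version buys is only independence from \textrm{Theorem \ref{Th1}}'s statement, at the cost of repeating its proof. Your bookkeeping of $\ol{\beta}$ inside the conjugated co-analytic part and the check that $(g*\phi)'(0)=0$ are exactly the points that need care, and you handle both correctly.
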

\begin{proof} Let $f=h+\ol{g}\in\mathcal{P}_{\mathcal{H}}^0(\alpha,M)$ be of the form (\ref{e1}), {\it i.e.,} $h(z)=z+\sum_{n=2}^\infty a_nz^n$, $g(z)=\sum_{n=2}^\infty b_nz^n$. Thus, we have
\beas f*\left(\phi+\beta\ol{\phi}\right)=\left(h+\ol{g}\right)*\left(\phi+\ol{\ol{\beta}\phi}\right)=h*\phi+\ol{\ol{\beta}(g*\phi)}.\eeas
To prove that $f*\left(\phi+\beta\ol{\phi}\right)\in\mathcal{P}_{\mathcal{H}}^0(\alpha,M)$, in view of \textrm{Theorem \ref{T1}}, it is sufficient to show that $f_{\epsilon}=h*\phi+\epsilon\ol{\beta}(g*\phi)\in\mathcal{P}(\alpha, M)$ for each $\epsilon$ $(|\epsilon|=1)$. Let $\phi(z)=z+\sum_{n=2}^\infty C_nz^n$. For each $|\epsilon|=1$, we have
\be\label{ew6}(1-\alpha)f_{\epsilon}'(z)+ \alpha zf_{\epsilon}''(z)=\left(\frac{\phi(z)}{z}\right)*\left((1-\alpha)\left(h(z)+\epsilon\ol{\beta}g(z)\right)'+\alpha z\left(h(z)+\epsilon\ol{\beta}g(z)\right)''\right).\ee
Since $f=h+\ol{g}\in\mathcal{P}_{\mathcal{H}}^0(\alpha,M)$, in view of \textrm{Theorem \ref{T1}}, we have $h+\epsilon \ol{\beta} g\in\mathcal{P}(\alpha, M)$ for $\epsilon$, 
$\beta$ with $|\epsilon\ol{\beta}|=1$, {\it i.e.}, $|\beta|=1$. Therefore, we have
 \beas\text{Re}\left((1-\alpha)\left(h(z)+\epsilon\ol{\beta}g(z)\right)'+\alpha z\left(h(z)+\epsilon\ol{\beta}g(z)\right)''\right)>-M\quad\text{for}\quad z\in\mathbb{D}.\eeas
Given that $\text{Re}\left(\phi(z)/z\right)>1/2$ for $z\in\mathbb{D}$, it follows that by using \textrm{Lemma \ref{lem3}} 
and (\ref{ew6}), we have 
\beas \text{Re}\left((1-\alpha)f_{\epsilon}'(z)+ \alpha zf_{\epsilon}''(z)\right)>-M\quad \text{for}\quad z\in\mathbb{D}.\eeas
Hence, $f_{\epsilon}\in\mathcal{P}(\alpha, M)$. This completes the proof.
\end{proof}
\begin{cor} Let $f\in\mathcal{P}_{\mathcal{H}}^0(\alpha,M)$ be given by (\ref{e1}) and $\phi\in\mathcal{K}$, where $\mathcal{K}$ denotes the family of all convex functions in $\mathbb{D}$. Then, 
$f*\left(\phi+\beta\ol{\phi}\right)\in\mathcal{P}_{\mathcal{H}}^0(\alpha,M)$, where $|\beta|=1$.\end{cor}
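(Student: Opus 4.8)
The plan is to obtain this corollary as an immediate consequence of Theorem \ref{Th2}. That theorem asserts that if $f\in\mathcal{P}_{\mathcal{H}}^0(\alpha,M)$ and $\phi\in\mathcal{A}$ satisfies $\mathrm{Re}\left(\phi(z)/z\right)>1/2$ in $\mathbb{D}$, then $f*\left(\phi+\beta\ol{\phi}\right)\in\mathcal{P}_{\mathcal{H}}^0(\alpha,M)$ for every $\beta$ with $|\beta|=1$. Hence it suffices to check that an arbitrary $\phi\in\mathcal{K}$ meets the two hypotheses placed on $\phi$ there, after which Theorem \ref{Th2} applies verbatim with the same $\beta$.

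First I would note the containment $\mathcal{K}\subseteq\mathcal{A}$: a convex function is normalized, analytic and univalent in $\mathbb{D}$, so it certainly lies in $\mathcal{A}$; thus the hypothesis ``$\phi\in\mathcal{A}$'' of Theorem \ref{Th2} is automatic. The one genuinely substantive step is the pointwise estimate $\mathrm{Re}\left(\phi(z)/z\right)>1/2$ for $z\in\mathbb{D}$. Here I would invoke the classical theory of convex univalent functions: by the Marx--Strohh\"acker theorem every $\phi\in\mathcal{K}$ is starlike of order $1/2$, and it is a standard consequence (see, e.g., \cite{D1983}) that in this situation $\phi(z)/z$ is subordinate to $1/(1-z)$, which forces $\mathrm{Re}\left(\phi(z)/z\right)>1/2$ throughout $\mathbb{D}$. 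Alternatively, this inequality for convex functions can simply be quoted verbatim from standard references on univalent function theory, so I would not reprove it.

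With that inequality in hand, Theorem \ref{Th2} applies directly with the same $\phi$ and $\beta$, and the conclusion $f*\left(\phi+\beta\ol{\phi}\right)\in\mathcal{P}_{\mathcal{H}}^0(\alpha,M)$ follows at once. I do not anticipate any real obstacle: the argument is essentially a one-line reduction to Theorem \ref{Th2}, and the only external ingredient is the well-documented fact that convex functions satisfy $\mathrm{Re}\left(\phi(z)/z\right)>1/2$. If a fully self-contained treatment were desired, the mild extra work would be to include a short proof of that estimate (for instance via the subordination characterization of convexity), but this is routine and I would leave it as a citation.
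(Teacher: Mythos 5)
Your proposal is correct and follows exactly the paper's own argument: the paper likewise quotes the standard fact that $\phi\in\mathcal{K}$ implies $\mathrm{Re}\left(\phi(z)/z\right)>1/2$ in $\mathbb{D}$ and then invokes Theorem \ref{Th2} directly. Your additional remarks on the Marx--Strohh\"acker justification are a harmless elaboration of the same one-line reduction.
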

\begin{proof} Since $\phi\in\mathcal{K}$, it follows that $\text{Re}\left(\phi(z)/z\right)>1/2$ for $z\in\mathbb{D}$. The result immediately follows from \textrm{Theorem \ref{Th2}}.\end{proof}
 \section*{Declarations}
\noindent {\bf Acknowledgement:} The work of the second author is supported by the University Grants Commission (IN) fellowship (No. F. 44 - 1/2018 (SA - III)).\\
{\bf Conflict of Interest:} Authors declare that they have no conflict of interest.\\
{\bf Data availability:} Not applicable.\\
{\bf Authors' contributions:} All authors have equal contribution to complete the manuscript. All of them read and approved the final manuscript.

\end{document}